\def\le{\leqslant}
\def\ge{\geqslant}
\def\dd#1{{\,\rm d}#1}
\def\ve{\varepsilon}
\def\tr#1{\left\lfloor #1\right\rfloor}
\def\cl#1{\left\lceil #1\right\rceil}
\def\ON{\raisebox{-0.2ex}{\scalebox{1.5}{$\oslash$}}}
\def\OY{\hspace*{-0.2ex}\raisebox{-0.88ex}{\scalebox{2.7}{$\bullet$}}}
\def\OE{\bigcirc}
\def\bfi{\mathbf{i}}
\newtheorem{thm}{Theorem}
\newtheorem{lmm}{Lemma}
\newtheorem{prop}{Proposition}
\begin{document}

\title{Random unfriendly seating arrangement \\
in a dining table}
\author{Hua-Huai Chern\\
    Department of Computer Science \\
    National Taiwan Ocean University \\
    Keelung 202\\
    Taiwan
\and Hsien-Kuei Hwang \\
    Institute of Statistical Science, \\
    Institute of Information Science\\
    Academia Sinica\\
    Taipei 115\\
    Taiwan
\and Tsung-Hsi Tsai \\
    Institute of Statistical Science \\
    Academia Sinica\\
    Taipei 115\\
    Taiwan}

\maketitle

\begin{abstract}
A detailed study is made of the number of occupied seats in an
unfriendly seating scheme with two rows of seats. An unusual
identity is derived for the probability generating function, which
is itself an asymptotic expansion. The identity implies particularly
a local limit theorem with optimal convergence rate. Our approach
relies on the resolution of Riccati equations. We also clarify
some simple yet delicate stochastic dominance relations.
\end{abstract}

\section{Introduction}

Freedman and Shepp formulated the ``unfriendly seating arrangement
problem" in 1962 \cite[Problem 62--3]{Freedman62}:
\begin{quote}
    \emph{There are $n$ seats in a row at a luncheonette and people sit
        down one at a time at random. They are unfriendly and so never
        sit next to one another (no moving over). What is the expected
        number of persons to sit down?}
\end{quote}
Let $Z_n$ denote the number of persons sitting down when no further
customers can sit properly without breaking the restriction of
unfriendliness. Solutions with different degree of precision or
generality were later proposed by many. In particular, Friedman and
Rothman \cite{Friedman64} proved that
\begin{align*}
    \mathbb{E}(Z_n)  &= \sum_{0\le k<n}(n-k)
    \frac{(-2)^k}{(k+1)!} \\
    &= \tfrac{1}{2}\left(1-e^{-2}\right) (n+3) -1
    + O\left(\frac{2^n}{(n+2)!}\right),
\end{align*}
for large $n$. The \emph{factorial} error term here seems
characteristic of sequential models of a similar nature; see, for
example, \eqref{VZn}, \eqref{EXn2} and \eqref{VXn} below and
\cite{DR64}. We will provide a general framework for characterizing
such small errors; see Proposition~\ref{prop-facto} below. In
addition, Friedman and Rothman　\cite{Friedman64} extended the
``degree of unfriendliness'' to any　integer $b\ge 1$, where any two
people have to sit with at least $b$ unoccupied seats between them.
This extension was mentioned to be related to R\'enyi's Parking
Problem and to a discrete parking problem studied by MacKenzie (see
\cite{MacKenzie62}) in which cars of the same length $\ell\ge 2$ are
parked uniformly at random along the curb with $n$ unit parking
spaces. Indeed, the latter problem with $\ell=2$ found its origin in
Flory's 1939 pioneering paper \cite{Flory39} in polymer chemistry,
and was later expanded into generic stochastic models under the name
``random sequential adsorption''; see \cite{Evans93} for a
comprehensive survey and \cite{CAP07,Evans05, Penrose02,Penrose08} a
more recent account.

Due to the simplicity and the usefulness of the model, the same
discrete parking problem was also studied independently under
different guises in applied probability and related areas. Page
\cite{Page59} studied a random pairing model in which $n$ isolated
points are paired randomly by adjacency until only singletons
remain. This model is identical to Flory's monomer-dimer model
\cite{Flory39} (or the discrete parking problem \cite{MacKenzie62}
where each car requires 2-unit parking space). The same model was
also encountered in a few diverse modeling contexts. Let $\zeta_n$
denote the resulting number of pairs when no more adjacent pair can
be formed. Then it is easy to see that
\[
    Z_n \equiv \tfrac12 \zeta_{n+1} \qquad (n\ge0).
\]

In addition to deriving a closed-form expression for the first three
moments of $\zeta_n$, Page \cite{Page59} also computed the variance,
which, when transferring to our $Z_n$, satisfies
\[
    \mathbb{V}(Z_n) = \binom{n+1}2 -\mu_n^2 -
    \sum_{0\le k\le n-2} \frac{(-2)^k}{(k+2)!}
    \binom{n-k}{2}\left(2^k(k-2)+k^2+4k+6\right);
\]
asymptotically,
\begin{align}\label{VZn}
    \mathbb{V}(Z_n) = e^{-4}(n+3) + O\left(
    \frac{4^n}{(n+2)!}\right).
\end{align}
Another interesting result in \cite{Page59} is the closed-form
expression for the bivariate generating function of $\mathbb{E}
(t^{\zeta_n})$, obtained by solving a Riccati equation; see also
\cite{Runnenburg82}. In terms of $Z_n$, this closed-form translates
into
\begin{align} \label{one-row-Fzt}
    \sum_{n\ge0} \mathbb{E}\left(t^{Z_n}\right) z^n
    = \frac{\sqrt{t}\left(\left( 1+\sqrt{t}\right)
    e^{2\sqrt{t}z}+1-\sqrt{t}\right)}{\left(1+ \sqrt{t}\right)
    \left(1-\sqrt{t}z\right) e^{2\sqrt{t}z}
    -\left( 1-\sqrt{t}\right) \left( 1+\sqrt{t}z\right) }.
\end{align}
Page predicted that the $\zeta_n$'s were asymptotically normally
distributed, which was later proved by Runnenburg
\cite{Runnenburg82} by the method of moments; see \cite{KR03} for an
extension. See also \cite{Downton61,DSI11,Flajolet97} for other
properties studied. The asymptotic normality is contained as a
special case of Penrose and Sudbury's very general central limit
theorem in \cite{PS05}, where they also derived a convergence rate
by Stein's method.

The exact solvability of such a model is however very rare in the
literature, and the next possibly solvable cases are the unfriendly
variants for two rows of seats with the same rule of nearest
neighbors exclusion, which we may refer to as the \emph{unfriendly
seating arrangement in a dining table}. Such a model and the like
were studied by physicists in the 1990's and the ``jamming density''
(the large-$n$ limit of the ratio between the expected number of
persons sitting down and the total number of seats) was given
explicitly by
\begin{align}\label{2R-limit}
    \tfrac{1}{4}\left( 2-e^{-1}\right)\approx 0.408030\dots
\end{align}
using different heuristic arguments; see \cite{BK92,FP92}. This
constant is to be compared with that in the one-row case
\[
    \tfrac{1}{2}\left( 1-e^{-2}\right)\approx 0.432332\dots;
\]
see also Finch's book \cite[\S 5.3.1]{Finch03} for more information.

The same problem was recently reformulated as the \emph{unfriendly
theater seating arrangement problem} by Georgiou et al.
\cite{GKK09}, where they indeed addressed the configuration of $m$
rows of seats (mentioned to be connected to maximal independent sets
of planar lattice) and proved the existence of the expected
proportion of occupied seats. In particular, they also derived the
jamming limit \eqref{2R-limit}. Unfortunately, the crucial
stochastic dominance relations used in their paper \cite{GKK09} are
incorrect, and thus their proofs remain incomplete (the asymptotic
linearity being well expected though). More precisely, they claimed
that if $H$ is an induced subgraph of $G$, then the
\emph{first-order stochastic dominance} relation $X_H\le X_G$ holds
in the sense that
\begin{equation} \label{g1}
    \mathbb{P}\left( X_{H}> k\right)
    \le \mathbb{P}\left( X_{G}>k\right) ,
\end{equation}
for all $k$, where $X_{H},X_{G}$ are the random variables counting
the number of occupied seats (or the cardinality of an independent
set) when starting with the seat configurations $H$ and $G$,
respectively, and following the same random unfriendly seating
procedure until the procedure terminates. It is known that this
implies the \emph{second-order stochastic dominance}
\begin{equation} \label{g2}
    \mathbb{E}(X_H)\le \mathbb{E}(X_G).
\end{equation}

Unfortunately, none of the two relations \eqref{g1} and \eqref{g2}
is correct. Here is a counterexample to \eqref{g1}. If the two
initial seat configurations are given as follows
\[
    H_1=
    \begin{array}{lll}
    & \OE &  \\
    \OE & \OE & \OE
    \end{array}
    \quad \text{and}\quad G_1=
    \begin{array}{lll}
    \OE & \OE &  \\
    \OE & \OE & \OE
    \end{array},
\]
then
\[
    \begin{cases}
        \mathbb{P}(X_{H_1}=1) = \frac14\\
        \mathbb{P}(X_{H_1}=3) = \frac34,
    \end{cases}
    \quad \text{and}\quad
    \begin{cases}
        \mathbb{P}(X_{G_1}=2) = \frac7{15}\\
        \mathbb{P}(X_{G_1}=3) = \frac8{15}.
    \end{cases}
\]
implying that
\[
    \mathbb{P}(X_{H_1}\ge 3)>\mathbb{P}(X_{G_1}\ge 3),
\]
contrary to (\ref{g1}). For a counterexample to \eqref{g2}, consider
the following two seat configurations
\[
    H_{2}=  \begin{array}{ll}
    \bigcirc &  \\
    & \bigcirc
    \end{array},\quad
    G_{2}=  \begin{array}{ll}
    \bigcirc &  \\
    \bigcirc & \bigcirc
    \end{array}.
\]
Then the expected numbers of occupied seats satisfy
$\mathbb{E}[X_{H_2}]=2>\mathbb{E}[X_{G_2}]=5/3$.

Due to the subtlety of the problem, we focus our attention in this
paper on the dining table model and we show that this model is also
explicitly solvable by solving a system of nonlinear differential
equations. This new result leads to interesting structural properties,
and many strong limit theorems will then follow. In particular, our
analysis provides the first rigorous, complete proof for the very
simple jamming limit \eqref{2R-limit} with an optimal error terms.
Some related stochastic dominance relations will be clarified in
Section~\ref{sec:sd}.

\section{Recurrences and solutions}

We consider a dining table with $2n$ seats arranged in two rows
\begin{align}\label{XXn}
    \mathscr{X}_n :=
    \begin{array}{l}
    \overbrace{
    \begin{array}{lllllllll}
        \OE & \OE & \OE & \OE
        & \OE & \cdots & \OE & \OE & \OE
    \end{array}}^{n}\\
    \begin{array}{lllllllll}
        \OE & \OE & \OE & \OE
        & \OE  & \cdots & \OE & \OE & \OE
    \end{array}
    \end{array}
\end{align}
Diners arrive one after another and each selects a seat uniformly at
random. If the seat is empty, then it becomes occupied, and two of
its neighboring seats together with the opposite one (in the other
row) are no more available. If the seat selected is occupied or
forbidden and there are still empty seats available, then the
(uniform) random selection is repeated until a seat is found. The
process stops as long as all seats are either occupied or forbidden.
An example with $n=10$ is given as follows (where ``$\ON$" stands
for a forbidden seat and ``$\OY$'' an occupied seat)
\[
    \begin{array}{llllllllll}
        \ON & \OY & \ON & \ON
        & \ON & \OY & \ON & \ON
        & \OY & \ON \\
        \OY & \ON & \ON & \OY
        & \ON & \ON & \OY & \ON
        & \ON & \OY
    \end{array}.
\]

Let $X_n$ count the total number of persons sitting down when such a
sequential process terminates. Then it is easy to see that
\[
    \tr{n/2}+1\le X_n \le n \qquad(n\ge1).
\]
By splitting the $2n$-problem at the first occupied seat into two
subproblems, we are then led to the recurrence relation for the
probability generating function $X_n(t) := \mathbb{E}(t^{X_n})$
\begin{align}\label{Xnt-rr}
    X_n(t) = \frac{t}{n}\sum_{0\le k\le n-1}Y_k(t) Y_{n-1-k}(t)
    \qquad(n\ge1),
\end{align}
with $X_0(t) = 1$. Here $Y_n$ counts the number of occupied seats
under the same unfriendly seating procedure but with the slightly
different initial configuration of the seats
\begin{align}\label{YYn}
    \mathscr{Y}_n :=
    \begin{array}{l}
    \overbrace{
    \begin{array}{llllllll}
        \OE & \OE & \OE & \OE
        & \OE & \cdots & \OE & \OE
    \end{array}}^{n-1}\\
    \begin{array}{lllllllll}
        \OE & \OE & \OE & \OE
        & \OE & \cdots & \OE & \OE & \OE
    \end{array}
    \end{array}
\end{align}
where the total number of seats is $2n-1$. The following two
diagrams show the obvious decompositions after the first seat is
occupied.
\begin{center}
\begin{tikzpicture}[scale=0.8, every node/.style={scale=0.8}]
\node[] at (0,0) {
\begin{tikzpicture}[scale=1.25,transform shape,
cross/.style={path picture={ \draw[black]
(path picture bounding box.south east)
-- (path picture bounding box.north west)
(path picture bounding box.south west)
-- (path picture bounding box.north east);
}}
]
\draw[thick] (0,0) -- (1,0) -- (1,0.5) -- (1.5,0.5)
-- (1.5,1) -- (0,1) -- (0,0);
\draw[thick] (2.5,0) -- (4.5,0) -- (4.5,1) -- (2,1)
-- (2,0.5) -- (2.5,0.5) -- (2.5,0);
\node [] at (2.25,0.25){\scalebox{1.5}{$\oslash$}};
\node [] at (1.25,0.25){\scalebox{1.5}{$\oslash$}};
\node [circle,fill,scale=1.05] at (1.75,0.25){};
\node [] at (1.75,0.75){\scalebox{1.5}{$\oslash$}};
\node [] at (0.5,0.5){$Y_\bullet$};
\node [] at (3.5,0.5){$Y_\bullet$};
\end{tikzpicture}
};

\node[] at (7,0) {
\begin{tikzpicture}[scale=1.25,transform shape,
cross/.style={path picture={ \draw[black]
(path picture bounding box.south east)
-- (path picture bounding box.north west)
(path picture bounding box.south west)
-- (path picture bounding box.north east);
}}
]
\draw[thick] (0,0) -- (1.5,0) -- (1.5,0.5) -- (1,0.5)
-- (1,1) -- (0,1) -- (0,0);
\draw[thick] (2,0) -- (4.5,0) -- (4.5,1) --(2.5,1)
--(2.5,0.5) --(2,0.5) -- (2, 0);
\node [circle,fill,scale=1.05] at (1.75,0.75){};
\node [] at (1.75,0.25){\scalebox{1.5}{$\oslash$}};
\node [] at (2.25,0.75){\scalebox{1.5}{$\oslash$}};
\node [] at (1.25,0.75){\scalebox{1.5}{$\oslash$}};
\node [] at (0.5,0.5){$Y_\bullet$};
\node [] at (3.5,0.5){$Y_\bullet$};
\end{tikzpicture}
};
%end, right part
\end{tikzpicture}
\end{center}

Applying the same conditioning argument to $Y_n$, we need to
introduce two additional sequences of random variables based on the
following seat configurations: for $n\ge1$
\begin{align*}
    \mathscr{A}_{-1}&=\mathscr{A}_{0}=\varnothing,\\
    \mathscr{A}_n &:=
    \begin{array}{l}
    \overbrace{
    \begin{array}{lllllllll}
        \OE & \OE & \OE & \OE
        & \OE & \cdots & \OE & \OE &\OE
    \end{array}}^{n}\\
    \hspace*{.82cm}
    \begin{array}{lllllllll}
        \OE & \OE & \OE & \OE
        & \OE & \cdots & \OE & \OE & \OE
    \end{array}
    \end{array}
\end{align*}
and
\begin{align*}
    \mathscr{B}_{-1}&=\varnothing, \;
    \mathscr{B}_{0}=\OE,\\
    \mathscr{B}_n &:= \hspace*{-0.9cm}
    \begin{array}{c} \hspace*{.82cm}
    \overbrace{
    \begin{array}{llllllll}
        \OE & \OE & \OE
        & \OE & \cdots & \OE & \OE &\OE
    \end{array}}^{n-1}\\
    \hspace*{.82cm}
    \begin{array}{llllllllll}
        \OE & \OE & \OE & \OE
        & \OE &  \cdots & \OE & \OE & \OE &\OE
    \end{array}
    \end{array}
\end{align*}
Let $A_{n}$, and $B_{n}$ denote the number of sitting persons under
the same unfriendly seating procedure when started from the
configurations $\mathscr{A}_n$ and $\mathscr{B}_n$, respectively.
The initial conditions are defined to be $A_{-1}=A_0=0$ and
$B_{-1}=0,B_{0}=1$. Then we have the following systems of
recurrences.
\begin{lmm} The probability generating functions
$A_n(t), B_n(t)$ and $Y_n(t)$ satisfy \label{lmm-rr}
\begin{align} \label{m22}
    \begin{cases}\displaystyle
    A_n(t) = \frac{t}{n}\sum_{1\le k\le n}A_{k-2}(t) B_{n-k}(t),\\
    \displaystyle
    B_n(t) = \frac{t}{2n} \left(\sum_{1\le k\le n-1}
    B_{k-1}(t)B_{n-1-k}(t) + \sum_{1\le k\le n+1} A_{k-2}(t)
    A_{n-k}(t)\right),
    \end{cases}
\end{align}
and
\[
    Y_n(t) = \frac{t}{2n-1} \left(\sum_{0\le k\le n-2}
    Y_k(t)B_{n-2-k}(t) + \sum_{0\le k\le n-1} Y_k(t)
    A_{n-2-k}(t)\right),
\]
for $n\ge1$ with the initial conditions $A_n(t)=B_n(t)=Y_n(t)=1$ if
$n<0$ and $A_0(t)=Y_0(t)=1$ and $B_0(t)=t$.
\end{lmm}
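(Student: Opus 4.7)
The plan is to apply the ``condition on the first diner'' (first-event) argument to each of the configurations $\mathscr{A}_n$, $\mathscr{B}_n$ and $\mathscr{Y}_n$ in turn. These contain $2n$, $2n$ and $2n-1$ empty seats respectively, which matches the prefactors $t/(2n)$ and $t/(2n-1)$ appearing in the statement; the smaller prefactor $t/n$ in the identity for $A_n(t)$ will arise only after exploiting the $180^{\circ}$ rotational symmetry of the parallelogram $\mathscr{A}_n$, which maps top-row first sits bijectively to bottom-row first sits and preserves the subsequent distribution.

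Conditioning on which of the equally-likely empty seats is occupied first contributes a factor $t$ (the new diner) and, by the unfriendly rule, renders forbidden the two same-row neighbours together with the seat directly opposite (whenever these exist). Because every admissible future seat must avoid all such forbidden seats, the occupied seat together with its forbidden neighbours forms an impassable vertical barrier, so the remaining empty regions on its left and on its right evolve as two independent unfriendly-seating processes. Hence the p.g.f.\ of the total occupancy factors as $t$ times a product of the p.g.f.'s associated with the two sub-configurations, and summing over all choices of the first seat produces convolution sums of the shape appearing in~\eqref{m22} and in the $Y_n$ identity.

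The core combinatorial content is then pattern recognition: identifying, for each starting configuration and each first seat, the shapes of the left and right pieces among the four types $\mathscr{X},\mathscr{Y},\mathscr{A},\mathscr{B}$. For $B_n$ (top row of length $n-1$, bottom row of length $n+1$ overhanging by one on each side), a top-row first sit at column $k$ leaves a left $\mathscr{B}_{k-2}$ and a right $\mathscr{B}_{n-k}$, which after re-indexing gives the first convolution sum; a bottom-row first sit at column $k$ leaves a left $\mathscr{A}_{k-2}$ and a right $\mathscr{A}_{n-k}$, giving the second sum, with the extremal cases $k=1$ and $k=n+1$ accommodated by $A_{-1}(t)=1$. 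For $Y_n$, a top-row sit yields a left $\mathscr{Y}$-piece and a right $\mathscr{B}$-piece; a bottom-row sit yields a left piece which is a $\mathscr{Y}$-shape with the two rows interchanged (distributionally identical, by the top--bottom symmetry of the model) and a right $\mathscr{A}$-piece. For $A_n$, a top-row sit at column $k$ splits into a left $\mathscr{A}_{k-2}$ and a right $\mathscr{B}_{n-k}$; the rotational symmetry then identifies top and bottom sits, turning $1/(2n)$ into $1/n$.

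The principal place where care is required --- the main, though not deep, obstacle --- is the bookkeeping at the boundary columns, where one of the ``forbidden neighbours'' required by the model falls outside the starting configuration and the corresponding sub-piece degenerates to the empty configuration or to a single seat. These cases are precisely absorbed by the conventions $A_n(t)=B_n(t)=Y_n(t)=1$ for $n<0$, $A_0(t)=Y_0(t)=1$ and $B_0(t)=t$; a short verification shows that the ranges $1\le k\le n-1$, $1\le k\le n+1$, $0\le k\le n-2$, $0\le k\le n-1$ in the statement enumerate each admissible first-seat column exactly once.
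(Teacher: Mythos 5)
Your proposal is correct and follows essentially the same route as the paper: condition on the (uniformly chosen) first occupied seat, use the barrier of forbidden seats to split into two independent sub-configurations, identify each piece as one of the shapes $\mathscr{A},\mathscr{B},\mathscr{Y}$ (up to the reflection/rotation symmetries you invoke), and let the stated initial conditions absorb the degenerate boundary pieces. The paper's proof is exactly this decomposition, presented as diagrams with the remark that the lemma follows by computing the corresponding probabilities.
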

\begin{proof}
After the first diner sits down, the random variable $Y_n$ is
decomposed in either the following two ways.
\begin{center}
\begin{tikzpicture}[scale=0.8, every node/.style={scale=0.8}]
\node[] at (0,0) {
\begin{tikzpicture}[scale=1.25,transform shape,
cross/.style={path picture={ \draw[black]
(path picture bounding box.south east)
-- (path picture bounding box.north west)
(path picture bounding box.south west)
-- (path picture bounding box.north east);
}}
]
\draw[thick] (0,0) -- (1.5,0) -- (1.5,0.5) --
(1,0.5) -- (1,1) -- (0,1) -- (0,0);
\draw[thick] (2,0) -- (5,0) --(5,0.5)--
(4.5,0.5)--(4.5,1) --(2.5,1) --(2.5,0.5) --(2,0.5) -- (2, 0);
\node [circle,fill,scale=1.05] at (1.75,0.75){};
\node [] at (1.75,0.25){\scalebox{1.5}{$\oslash$}};
\node [] at (2.25,0.75){\scalebox{1.5}{$\oslash$}};
\node [] at (1.25,0.75){\scalebox{1.5}{$\oslash$}};
\node [] at (0.5,0.5){$Y_\bullet$};
\node [] at (3.5,0.5){$B_\bullet$};
\end{tikzpicture}
};
\node[] at (7,0) {
\begin{tikzpicture}[scale=1.25,transform shape,
cross/.style={path picture={ \draw[black]
(path picture bounding box.south east)
-- (path picture bounding box.north west)
(path picture bounding box.south west)
-- (path picture bounding box.north east);
}}
]
\draw[thick] (-1,0) -- (1,0) -- (1,0.5) --
(1.5,0.5) -- (1.5,1) -- (-1,1) -- (-1,0);
\draw[thick] (2.5,0) -- (4,0)--(4,0.5)--(3.5,0.5) --
(3.5,1) -- (2,1) -- (2,0.5) -- (2.5,0.5) -- (2.5,0);
\node [] at (2.25,0.25){\scalebox{1.5}{$\oslash$}};
\node [] at (1.25,0.25){\scalebox{1.5}{$\oslash$}};
\node [circle,fill,scale=1.05] at (1.75,0.25){};
\node [] at (1.75,0.75){\scalebox{1.5}{$\oslash$}};
\node [] at (0,0.5){$Y_\bullet$};
\node [] at (3,0.5){$A_\bullet$};
\end{tikzpicture}
};
\end{tikzpicture}
\end{center}
Similarly, the random variable $A_n$ is decomposed as follows
\begin{center}
\begin{tikzpicture}[scale=0.8, every node/.style={scale=0.8}]
\node[] at (0,0) {
\begin{tikzpicture}[scale=1.25,transform shape,
cross/.style={path picture={ \draw[black]
(path picture bounding box.south east)
-- (path picture bounding box.north west)
(path picture bounding box.south west)
-- (path picture bounding box.north east);
}}
]
\draw[thick] (-0.5,0) -- (1.5,0) -- (1.5,0.5) -- (1,0.5) -- (1,1)
-- (-1,1)--(-1,0.5)--(-0.5,0.5) --(-0.5,0);
\draw[thick] (2,0) -- (4,0) --(4,0.5)-- (3.5,0.5)
--(3.5,1) --(2.5,1) --(2.5,0.5) --(2,0.5) -- (2, 0);
\node [circle,fill,scale=1.05] at (1.75,0.75){};
\node [] at (1.75,0.25){\scalebox{1.5}{$\oslash$}};
\node [] at (2.25,0.75){\scalebox{1.5}{$\oslash$}};
\node [] at (1.25,0.75){\scalebox{1.5}{$\oslash$}};
\node [] at (0.3,0.5){$A_\bullet$};
\node [] at (3,0.5){$B_\bullet$};
\end{tikzpicture}
};
\node[] at (7,0) {
\begin{tikzpicture}[scale=1.25,transform shape,
cross/.style={path picture={ \draw[black]
(path picture bounding box.south east)
-- (path picture bounding box.north west)
(path picture bounding box.south west)
-- (path picture bounding box.north east);
}}
]
\draw[thick] (0,0) -- (1,0) -- (1,0.5) -- (1.5,0.5) -- (1.5,1)
-- (-0.5,1)--(-0.5,0.5)--(0,0.5) -- (0,0);
\draw[thick] (2.5,0) -- (5,0)--(5,0.5)--(4.5,0.5) -- (4.5,1)
-- (2,1) -- (2,0.5) -- (2.5,0.5) -- (2.5,0);
\node [] at (2.25,0.25){\scalebox{1.5}{$\oslash$}};
\node [] at (1.25,0.25){\scalebox{1.5}{$\oslash$}};
\node [circle,fill,scale=1.05] at (1.75,0.25){};
\node [] at (1.75,0.75){\scalebox{1.5}{$\oslash$}};
\node [] at (0.5,0.5){$B_\bullet$};
\node [] at (3.5,0.5){$A_\bullet$};
\end{tikzpicture}
};
\end{tikzpicture}
\end{center}
And, finally, we have the two possible decompositions for $B_n$
\begin{center}
\begin{tikzpicture}[scale=0.8, every node/.style={scale=0.8}]
%begin, top part
\node[] at (0,0) {
\begin{tikzpicture}[scale=1.25,transform shape,
cross/.style={path picture={ \draw[black]
(path picture bounding box.south east)
-- (path picture bounding box.north west)
(path picture bounding box.south west)
-- (path picture bounding box.north east);
}}
]
\draw[thick] (0,0) -- (1.5,0) -- (1.5,0.5) -- (1,0.5)
-- (1,1) -- (0,1)-- (0,0.5)--(-0.5,0.5) --(-0.5,0)-- (0,0);
\draw[thick] (2,0) -- (5,0) --(5,0.5)-- (4.5,0.5)--(4.5,1)
--(2.5,1) --(2.5,0.5) --(2,0.5) -- (2, 0);
\node [circle,fill,scale=1.05] at (1.75,0.75){};
\node [] at (1.75,0.25){\scalebox{1.5}{$\oslash$}};
\node [] at (2.25,0.75){\scalebox{1.5}{$\oslash$}};
\node [] at (1.25,0.75){\scalebox{1.5}{$\oslash$}};
\node [] at (0.5,0.5){$B_\bullet$};
\node [] at (3.5,0.5){$B_\bullet$};
\end{tikzpicture}
};
\node[] at (7,0) {
\begin{tikzpicture}[scale=1.25,transform shape,
cross/.style={path picture={ \draw[black]
(path picture bounding box.south east)
-- (path picture bounding box.north west)
(path picture bounding box.south west)
-- (path picture bounding box.north east);
}}
]
\draw[thick] (-1,0) -- (1,0) -- (1,0.5) -- (1.5,0.5)
-- (1.5,1) --(-0.5,1)--(-0.5,0.5) -- (-1,0.5)--(-1,0);
\draw[thick] (2.5,0) -- (4,0)--(4,0.5)--(3.5,0.5)
-- (3.5,1) -- (2,1) -- (2,0.5) -- (2.5,0.5) -- (2.5,0);
\node [] at (2.25,0.25){\scalebox{1.5}{$\oslash$}};
\node [] at (1.25,0.25){\scalebox{1.5}{$\oslash$}};
\node [circle,fill,scale=1.05] at (1.75,0.25){};
\node [] at (1.75,0.75){\scalebox{1.5}{$\oslash$}};
\node [] at (0.2,0.5){$A_\bullet$};
\node [] at (3,0.5){$A_\bullet$};
\end{tikzpicture}
};
%end, below part
\end{tikzpicture}
\end{center}
The lemma follows by computing the corresponding probabilities.
\end{proof}

Consider now $G_A(z,t) := \sum_{n\ge0}\mathbb{E}
\left(t^{A_n}\right)z^n$, the bivariate generating function of
$A_n$. The notations $G_B(z,t)$ and $G_Y(z,t)$ are defined
similarly. Then Lemma~\ref{lmm-rr} implies the following system of
Riccati equations.
\begin{lmm} The bivariate generating functions $G_A, G_B$ satisfy
\begin{align*}
    \begin{cases}
        G_A' =tG_B+tzG_AG_B, \\ \displaystyle
        G_B' =tG_A+\frac{tz}{2}\left(
        G_A^{2}+G_B^{2}\right),
    \end{cases}
\end{align*}
with $G_A(0,t)=1$ and $G_B(0,t)=t$, and
\[
    2zG_Y'= \left(1+tz+tz^2(G_A+G_B)\right) G_Y-1,
\]
with $G_Y(0,t) = 1$. Here for simplicity $G_\bullet =
G_\bullet(z,t)$ and $G_\bullet' := (\partial/\partial
z)G_\bullet(z,t)$.
\end{lmm}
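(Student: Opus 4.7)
The plan is to convert each recurrence in Lemma~\ref{lmm-rr} into a differential equation for the corresponding bivariate generating function via the standard dictionary: multiplication of the $n$-th coefficient by $n$ corresponds to applying $\partial_z$, and Cauchy convolutions turn into ordinary products. Since the recurrences carry denominators $n$, $2n$, and $2n-1$, the first step is to clear them by multiplying each recurrence through by the corresponding factor, then multiplying by $z^{n-1}$ and summing over $n\ge 1$. The left-hand sides become $G_A'$, $2G_B'$, and the linear combination $2G_Y'-z^{-1}(G_Y-1)$ respectively.

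For the equation on $G_A$, after the index shift $j=k-2$ the convolution becomes $\sum_{j=-1}^{n-2}A_j(t)B_{n-j-2}(t)$; peeling off the boundary term $j=-1$ (using $A_{-1}(t)=1$) produces the linear contribution $tG_B$, while the tail $j\ge 0$ assembles into $tzG_AG_B$. The $B_n$-recurrence is treated analogously: its first convolution shifts to $\sum_{j=0}^{n-2}B_j(t)B_{n-2-j}(t)$ and yields $tzG_B^2$, while its second convolution $\sum_{j=-1}^{n-1}A_j(t)A_{n-j-2}(t)$ has \emph{two} boundary terms (from $j=-1$ and $j=n-1$, both equal to $A_{n-1}(t)$ by $A_{-1}(t)=1$) that sum to $2tG_A$, with the bulk giving $tzG_A^2$; dividing by $2$ yields the stated Riccati system. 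For $G_Y$, the same template applies: the right-hand side decomposes into $tzG_YG_B$ from the first sum and $tG_Y+tzG_YG_A$ from the second (the extra $tG_Y$ again coming from the boundary index $k=n-1$), and multiplying through by $z$ and collecting terms delivers the stated equation. The initial conditions are read off directly from $A_0(t)=1$, $B_0(t)=t$, and $Y_0(t)=1$.

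The only place requiring care, and thus the main obstacle, is the bookkeeping of the boundary indices: the conventions $A_{-1}(t)=B_{-1}(t)=1$ inject terms that sit outside the natural convolution range, and each such term must be re-absorbed precisely into the linear part of the Riccati system. A sign or factor error here would spoil the otherwise clean quadratic structure on the right-hand side; in particular, the factor $2$ in front of $G_A$ in the $G_B$-equation arises from the \emph{coincidence} that the two extreme indices $j=-1$ and $j=n-1$ both land on the convention $A_{-1}(t)=1$, and this is what forces the symmetric appearance of $G_A^2+G_B^2$. Apart from this, the derivation is routine symbolic manipulation with no genuine analytic difficulty.
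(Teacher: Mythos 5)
Your derivation is correct and is precisely the routine coefficient-to-ODE translation that the paper itself treats as immediate (it states the lemma with no written proof, merely asserting that it follows from the recurrences of Lemma~\ref{lmm-rr}). Your bookkeeping of the boundary indices --- the single $j=-1$ term giving $tG_B$ in the $G_A$-equation, the two extreme terms $j=-1$ and $j=n-1$ giving $2tG_A$ in the $G_B$-equation, and the $k=n-1$ term giving $tG_Y$ in the $G_Y$-equation --- checks out exactly, as do the initial conditions.
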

These equations admit explicit solutions as follows. Define
\begin{align}\label{Uzt}
    U(z,t)=\frac{2t(1+t)}{(1+t)(1-tz)-(1-t)e^{-tz}}.
\end{align}
\begin{lmm} We have
\begin{align}\label{FAFB}
    G_A(z,t)=\frac{U(z,t)+U(z,-t)}2 ,
    \quad G_B(z,t)=\frac{U(z,t)-U(z,-t)}2,
\end{align}
and
\begin{align}\label{FY-PQ}
    G_Y(z,t) = \frac{Q(z,t)}{P(z,t)},
\end{align}
where
\begin{align*}
    P(z,t) &= (1+t)(1-tz)-(1-t)e^{-tz},\\
    Q(z,t) &= 1+t-(1-t)e^{-tz}-
    \frac{1-t}2\,tz\int_0^1 e^{-tz(1+v)/2}v^{-1/2}
    \dd v.
\end{align*}
\end{lmm}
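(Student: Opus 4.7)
The plan is to decouple the first-order system for $G_A,G_B$ via the symmetric/antisymmetric combination $W_\pm(z,t) := G_A(z,t)\pm G_B(z,t)$. Adding and subtracting the two Riccati equations gives the two decoupled scalar Riccati equations
\begin{align*}
    W_+' = tW_+ + \tfrac{tz}{2}W_+^2,
    \qquad
    W_-' = -tW_- - \tfrac{tz}{2}W_-^2,
\end{align*}
with initial values $W_\pm(0,t)=1\pm t$. Since the equation for $W_-$ is obtained from that for $W_+$ by the substitution $t\mapsto -t$ and the initial data transform compatibly, uniqueness forces $W_-(z,t)=W_+(z,-t)$. It therefore suffices to solve the single Riccati equation for $W_+$, whose solution I will denote $U(z,t)$.

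First I would linearize via $V:=1/W_+$, which turns the Riccati equation into $V'+tV=-tz/2$ with $V(0,t)=1/(1+t)$. Multiplying by the integrating factor $e^{tz}$ and computing $\int_0^z(ts/2)e^{ts}\dd s$ by parts gives, after simplification, $V(z,t)=P(z,t)/(2t(1+t))$ with $P(z,t)=(1+t)(1-tz)-(1-t)e^{-tz}$, so that $U=1/V$ coincides with the expression in \eqref{Uzt}. The formulas \eqref{FAFB} for $G_A$ and $G_B$ then follow from $G_A=(W_++W_-)/2$, $G_B=(W_+-W_-)/2$ together with $W_-(z,t)=U(z,-t)$.

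For $G_Y$, note that $G_A+G_B=U$, so the equation $2zG_Y'=(1+tz+tz^2U)G_Y-1$ is linear in $G_Y$ with coefficients rational in $P$. Substituting the ansatz $G_Y=Q/P$ and clearing denominators, the coefficient of $Q$ collapses via the algebraic identity
\[
    2zP'+(1+tz)P+2t^2(1+t)z^2 = (1-tz)P,
\]
which follows from $P'=-tP-t^2(1+t)z$ (itself a consequence of $(1+t)-(1-t)e^{-tz}=P+t(1+t)z$). Hence $Q$ satisfies the linear ODE $2zQ'=(1-tz)Q-P$ with $Q(0,t)=P(0,t)=2t$.

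The last step---producing the claimed integral representation of $Q$---is where I expect the main difficulty. The homogeneous equation $2zQ'=(1-tz)Q$ has solution $\sqrt{z}\,e^{-tz/2}$, which vanishes at the regular singular point $z=0$, so variation of parameters produces an integrand $s^{-3/2}e^{ts/2}P(s,t)$ that is genuinely singular at $s=0$ and must be regularized. The polynomial part of $P$ integrates explicitly via the identity $\frac{d}{ds}\bigl(-2s^{-1/2}e^{ts/2}\bigr)=s^{-3/2}(1-ts)e^{ts/2}$ and contributes the closed-form piece $1+t-(1-t)e^{-tz}$; the $(1-t)e^{-ts}$ portion of $P$ then contributes an incomplete-gamma-type integral which, after the change of variables $s=z(1+v)/2$, becomes $-\tfrac12(1-t)tz\int_0^1 e^{-tz(1+v)/2}v^{-1/2}\dd v$. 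A cleaner alternative, which I would adopt for the write-up, is to verify the stated formula a posteriori: differentiating under the integral sign reduces the ODE to the integration-by-parts identity
\[
    tz\int_0^1 v^{1/2}e^{-tz(1+v)/2}\dd v
    = \int_0^1 v^{-1/2}e^{-tz(1+v)/2}\dd v - 2e^{-tz},
\]
which bypasses the apparent singularity at $z=0$ entirely and reduces the verification to a routine algebraic cancellation.
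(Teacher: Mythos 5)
Your proposal is correct. For $G_A,G_B$ it coincides with the paper's proof: the paper likewise sets $U=G_A+G_B$, $V=G_A-G_B$, observes $V(z,t)=U(z,-t)$, and linearizes the Bernoulli equation via the reciprocal (the paper uses $u=-U^{-1}$ where you use $V=1/W_+$, a sign convention only). For $G_Y$ the routes genuinely diverge. The paper works with $\tilde G_Y=G_Y-1$ and the integrating factor $I(z)=z^{-1/2}e^{tz/2}P(z,t)$; the point of subtracting $1$ first is that the resulting right-hand side $\tfrac{t}{2}I(z)(1+zU)$ has only an integrable $z^{-1/2}$ singularity at the origin, so the solution is obtained \emph{constructively} as $\tfrac{t}{2I(z)}\int_0^z I(s)(1+sU(s,t))\,\dd s$, and the stated $Q$ emerges after the substitution $s=zv$. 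You instead derive the linear ODE $2zQ'=(1-tz)Q-P$ for the numerator $Q=PG_Y$ (your identity $2zP'+(1+tz)P+2t^2(1+t)z^2=(1-tz)P$ checks out, as does $P'=-tP-t^2(1+t)z$) and then verify the claimed $Q$ a posteriori, reducing everything to the integration-by-parts identity $\int_0^1 v^{-1/2}e^{-tz(1+v)/2}\dd v=2e^{-tz}+tz\int_0^1 v^{1/2}e^{-tz(1+v)/2}\dd v$, which is correct. Your route requires knowing the answer in advance but avoids the delicate regularization at the singular point that your variation-of-parameters sketch would otherwise need (and which the paper's choice of $\tilde G_Y$ quietly sidesteps); to be fully airtight you should add one sentence noting that the analytic solution of $2zQ'=(1-tz)Q-P$ at the regular singular point $z=0$ is unique (the coefficient recurrence $(2n-1)q_n=-tq_{n-1}-[z^n]P$ determines all $q_n$, since $2n-1\ne0$), so that satisfying the ODE plus analyticity indeed pins down $Q$, hence $G_Y=Q/P$.
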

Note that $Q$ can be expressed in terms of the error function or the
standard normal distribution function $\Phi$. For example,
\[
    Q(z,t) = 1+t-(1-t)e^{-tz/2}
    \left(e^{-tz/2} - \sqrt{\frac{\pi}{2} tz}
    +\sqrt{2\pi tz}\,\Phi(\sqrt{tz})\right).
\]
\begin{proof}
For convenience, define $V(z,t) := U(z,-t)$. Then $U=G_A+G_B$ and
$V=G_A-G_B$ satisfy the simpler equations
\begin{align*}
    \begin{cases} \displaystyle
    U'=tU+\frac{tz}{2}U^2, \\ \displaystyle
    V'=-tV-\frac{tz}{2}V^2,
    \end{cases}
\end{align*}
with $U(0,t)=1+t$ and $V(0,t)=1-t$. Since this is a system of
Bernoulli equations, we consider the transformation $u=-U^{-1}$,
which satisfies the equation
\[
    u'+tu=\frac{tz}{2},
\]
with $u(0,t)=-1/(1+t)$. Solving this equation gives
\eqref{Uzt}, and \eqref{FAFB} follows.

For $G_Y$, we then have the first-order differential equation
\[
    2zG_Y'= \left(1+tz+tz^2U\right) G_Y-1.
\]
To solve this equation, we consider $\tilde{G}_Y := G_Y-1$ and
introduce the integration factor
\[
    I(z) = I(z,t) := z^{-1/2} e^{tz/2} P(z,t).
\]
Then
\[
    (I\cdot \tilde{G}_Y)' = \frac{tI}{2} (1+zU),
\]
with $\tilde{G}_Y(0,t)=0$, which has the solution
\[
   \tilde{G}_Y(z,t)
    = \frac{t}{2I(z)}\int_0^z I(s) (1+sU(s,t))\dd s.
\]
Note that
\begin{align*}
    I(z,t)(1+zU(z,t)) = z^{-1/2} \left( (1+t)(1+zt)
    e^{tz/2}-(1-t)e^{-tz/2}\right) .
\end{align*}
Then
\begin{align*}
    &\int_0^z I(s,t) \left(1+s U(s,t)\right)\dd{s} \\
    &\qquad= 2\sqrt{z}(1+t)e^{tz/2}-(1-t)\int_0^z s^{-1/2}
    e^{-ts/2}\dd{s}\\
    &\qquad=2\sqrt{z}(1+t){\rm e}^{tz/2}-(1-t)
    \sqrt{z}\int_0^1 v^{-1/2}e^{-tzv/2}\dd{v}.
\end{align*}
Thus
\begin{align*}
    \widetilde{G}_Y(z,t)
    &=\frac{Q(z,t)}{P(z,t)}
    =1+\frac {t}{2I(z,t)} \int_0^z I(s,t)
    \left(1+s U(s,t)\right)\dd{s}\\
    &= \frac1{P(z,t)}\left(1+t-(1-t)
    e^{-tz}-\frac{t(1-t)}2 z \int_0^1
    e^{-tz(1+v)/2}v^{-1/2}\dd{v}\right),
\end{align*}
which proves \eqref{FY-PQ}.
\end{proof}

Returning to $X_n$, by \eqref{Xnt-rr}, we have
\begin{align}\label{FX}
    G_X(z,t) := \sum_{n\ge0}\mathbb{E}
    \left(t^{A_n}\right)z^n
    = 1+t\int_0^zG_Y(u,t)^2 \dd u.
\end{align}

Since the uniform splitting procedure also arises naturally in
diverse algorithmic and combinatorial contexts, Riccati equations
were often encountered in related literature; see, for example,
\cite{FGM97,PP98}.

\section{Mean and variance}
\label{sec:mv}

With the explicit expressions derived above, we have two different
approaches to compute the mean and the variance: one based on a
direct use of \eqref{FX} and a suitable manipulation of the error
terms (see \cite[Ch.\ VII]{FS09}) and the other depending on
Quasi-Power type argument (see \cite[\S IX.
5]{FS09},\cite{Hwang98}). While both approaches provide readily the
two dominant asymptotic terms, the characterization of the extremely
small error requires a more careful analysis. For methodological
interest, we discuss the first approach here by providing a general
means for error analysis, which will also be useful for problems of
a similar nature. The second approach will be briefly indicated
later.

\begin{thm} The mean of $X_n$ satisfies
\begin{align}\label{EXn2}
    \mathbb{E}(X_n) = \mu n +c_1 + O\left(\frac1{(n+3)!}\right),
\end{align}
where $\mu := 1-e^{-1}/2$ and ($\phi := 2\Phi(1)-1$)
\begin{align}\label{c1}
    c_1 := \frac{e^{-1}}{2}\left(\sqrt{2\pi e}\,\phi-1\right)
    \approx 0.33502\,27062\,94844\dots,
\end{align}
and the variance satisfies
\begin{align}\label{VXn}
    \mathbb{V}(X_n) = \sigma^2 n + c_2 +
    O\left(\frac{2^n}{(n+4)!}\right),
\end{align}
where $\sigma := \sqrt{\frac34}\,e^{-1}$ and
\begin{align}\label{c2}
\begin{split}
    c_2 := \frac{e^{-2}}{4}\left(-\pi e\phi^2
    -2\sqrt{2\pi e}\,\phi +5\right)
    \approx -0.15640\,75038\,00915\dots
\end{split}
\end{align}
\end{thm}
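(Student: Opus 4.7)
\noindent\textit{Proof plan.} The plan is to compute the generating functions
$M_1(z):=\partial_t G_X(z,1)=\sum_n\mathbb{E}(X_n)z^n$ and
$M_2(z):=\partial_t^2 G_X(z,1)=\sum_n\mathbb{E}(X_n(X_n-1))z^n$ directly from the integral identity $G_X(z,t)=1+t\int_0^z(Q(u,t)/P(u,t))^2\,\dd u$, then isolate the singular part at the unique pole $z=1$ and apply Proposition~\ref{prop-facto} to the entire remainder. The specializations $Q(z,1)=2$ and $P(z,1)=2(1-z)$ give $G_Y(z,1)=1/(1-z)$, which reduces $M_1$ and $M_2$ to integrals of rational combinations of the entire functions
\[
p(z):=\partial_t P(z,1)=1-3z+e^{-z},\qquad
q(z):=\partial_t Q(z,1)=1+e^{-z}+\tfrac{z}{2}\int_0^1 e^{-z(1+v)/2}v^{-1/2}\,\dd v,
\]
together with their second-order in $t$ counterparts.

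Since $P(\cdot,1)$ vanishes only at $z=1$, both $M_1$ and $M_2$ are meromorphic on $\mathbb{C}$ with unique poles at $z=1$ of orders $2$ and $3$ respectively, so they admit Laurent decompositions
\[
M_1(z)=\frac{\mu}{(1-z)^2}+\frac{c_1-\mu}{1-z}+R_1(z),\qquad
M_2(z)=\frac{2\mu^2}{(1-z)^3}+\frac{\beta}{(1-z)^2}+\frac{\gamma}{1-z}+R_2(z),
\]
with $R_1,R_2$ entire. The residues are read off from the Taylor expansion of $(1-z)q(z)-p(z)$ (and its second-order analogue) around $z=1$. A crucial input is the identity $\int_0^1(v^{-1/2}-v^{1/2})e^{-v/2}\,\dd v=2e^{-1/2}$, obtained by one integration by parts from $[v^{1/2}e^{-v/2}]_0^1=e^{-1/2}$; it gives $q'(1)=-e^{-1}/2$ and forces the coefficient of $(1-z)^2$ in $(1-z)q(z)-p(z)$ to vanish, thus preventing a $\log(1-z)$ term from surfacing after the $z$-integration. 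Evaluating the residues with the help of $\int_0^1 e^{-(1+v)/2}v^{-1/2}\,\dd v=\sqrt{2\pi}\,\phi$ then yields $\mu=1-e^{-1}/2$ and $c_1=\tfrac12 e^{-1}(\sqrt{2\pi e}\,\phi-1)$. The variance expansion follows from $\mathbb{V}(X_n)=[z^n](M_2+M_1)-([z^n]M_1)^2$: the triple-pole residue $\alpha=2\mu^2$ is exactly tuned to cancel the $\mu^2n^2$ contribution of $([z^n]M_1)^2$, leaving the announced $\sigma^2=\tfrac34 e^{-2}$ and the displayed $c_2$ after further bookkeeping.

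The remainders $R_1,R_2$ are entire, built from $e^{-z}$, polynomials in $z$, and the error-function integral in $q$ (for $R_2$, also its second-order analogue). A direct Cauchy estimate shows $R_1$ has exponential order $1$ and type at most $1$, while $R_2$ has order $1$ and type at most $2$; the extra factor of $2$ is the tell-tale signature of products such as $e^{-z}\cdot e^{-z}$ hidden in $\partial_t^2G_Y(z,1)$ and in the squared mean. Applying Proposition~\ref{prop-facto} then converts these type estimates into the sharp factorial bounds $[z^n]R_1=O(1/(n+3)!)$ and $[z^n]R_2=O(2^n/(n+4)!)$, completing the proof. The main obstacle along the way is the verification of the cancellation identities (such as $q'(1)=-e^{-1}/2$ and its variance counterpart) that keep $R_1$ and $R_2$ entire: without them one would only obtain a polynomially small error rather than the factorial rate. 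A secondary technical point is the uniform type bound on the error-function integral for $|z|$ large in every direction, which is routine in $\Re z\ge 0$ via the Laplace method and follows in $\Re z\le 0$ from the pointwise estimate $|e^{-z(1+v)/2}|\le e^{|z|/2}$ inside the $v$-integral.
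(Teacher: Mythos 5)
Your setup coincides with the paper's: the authors also differentiate the closed form at $t=1$ (they compute $M_Y(z)=\partial_tG_Y(z,1)$ and then $M_X(z)=2\int_0^zM_Y(u)(1-u)^{-1}\dd u+\int_0^z(1-u)^{-2}\dd u$, which is your $M_1$), isolate the polar part at $z=1$, and control the remainder via Proposition~\ref{prop-facto}. Your cancellation bookkeeping is sound: $q'(1)=-e^{-1}/2$ does follow from $\int_0^1(v^{-1/2}-v^{1/2})e^{-v/2}\dd v=2e^{-1/2}$, the coefficient of $(1-z)^2$ in $(1-z)q(z)-p(z)$ does vanish, and this is exactly what prevents a $\log(1-z)$ (equivalently a $\Theta(1/n)$ term) after the $z$-integration; the paper uses the same fact implicitly when it asserts that \eqref{fQz2} yields $\mu n+c_1$ with no further polynomially decaying terms.

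The gap is in the last step, where you claim that $R_1$ being entire of order $1$ and type $\le1$ (resp.\ $R_2$ of type $\le2$) yields $[z^n]R_1=O(1/(n+3)!)$ (resp.\ $[z^n]R_2=O(2^n/(n+4)!)$) ``by Proposition~\ref{prop-facto}''. This does not work: a type bound gives, via Cauchy's estimate on the optimal circle $|z|=n/\tau$, only $[z^n]R=O(\tau^n\sqrt{n}/n!)$, which misses the claimed bounds by a factor of order $n^{3.5}$ (mean) and $n^{4.5}$ (variance); and the implication ``type $\tau$ $\Rightarrow$ coefficients $O(\tau^n/(n+m)!)$'' is simply false, as $e^{-z}$ (type $1$, coefficients exactly $(-1)^n/n!$) shows. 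Proposition~\ref{prop-facto} is not a device for upgrading growth estimates of an arbitrary entire remainder: its input is a coefficient bound $[z^n]f=O(\ve_n)$ on the entire \emph{numerator} in a representation $f(z)/(1-z)^m$ or $\int_0^zf(u)(1-u)^{-m}\dd u$, and the extra factorial factors in its error $O(\ve_{n+m})$, resp.\ $O(\ve_{n+m-1}/n)$, come from the binomial tail sums $\sum_{k\ge n+m}\binom{n+m-k-1}{m-1}f_k$ attached to the pole order $m$ and from the outer $\frac1n$ of the integration --- not from any intrinsic property of the remainder. To close the gap you must (i) exhibit $M_1$ and the second factorial moment in the forms $\int_0^zf_1(u)(1-u)^{-3}\dd u$ and $\int_0^zf_2(u)(1-u)^{-4}\dd u$ with $f_1,f_2$ entire, and (ii) prove the coefficient bounds $[z^n]f_1=O(1/n!)$ and $[z^n]f_2=O(2^n/n!)$; the paper does (ii) via Rice's integral for $f_1$ and via the convolution bound $\sum_k\frac1{k!(n-k)!}=O(2^n/n!)$ for the terms $f_1^2$ and $e^{-z}f_1$ in $f_2$ (this convolution, not a ``hidden $e^{-2z}$'', is the true source of the $2^n$). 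Only then does \eqref{fQz2} deliver the stated $O(1/(n+3)!)$ and $O(2^n/(n+4)!)$.
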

From \eqref{EXn2}, we see that the jamming density is given by
\[
    \lim_{n\to\infty}\frac{\mathbb{E}(X_n)}{2n}
    = \tfrac14\left(2-e^{-1}\right).
\]
Also the $O$-terms in \eqref{EXn2} and \eqref{VXn} are smaller than
the corresponding ones in the one-row version.
\begin{proof}
From \eqref{FY-PQ}, we have
\begin{align*}
    M_Y(z) &:= \sum_{n\ge0}\mathbb{E}(Y_n) z^n
    =\frac{\partial}{\partial t}\, G_Y(z,t) \Bigr|_{t=1}\\
    &= \frac{z}{(1-z)^2}\left(1-\frac{e^{-z}}{2}\right)
    +\frac{z}{4(1-z)}\int_0^1 v^{-1/2}e^{-(1+v)z/2}\dd v.
\end{align*}
Then we deduce that
\begin{align}
    M_X(z) &:= \sum_{n\ge0}\mathbb{E}(X_n) z^n
    = 2\int_0^z\frac{M_Y(u)}{1-u}\dd u +
    \int_0^z \frac1{(1-u)^2} \dd u \label{MXz}\\
    &= \frac{\mu}{(1-z)^2}
    + \frac{c_0}{1-z} + O(1), \nonumber
\end{align}
as $z\sim 1$, where
\[
    c_0 := -1 + \frac{\sqrt{\pi}}{\sqrt{2e}}
    \left(2\Phi(1)-1\right)=-1 + \frac{\sqrt{\pi}}{\sqrt{2e}}
    \,\phi.
\]
Consequently, by standard singularity analysis \cite[Ch.\ VII]{FS09},
\begin{align}\label{EXn}
    \mathbb{E}(X_n) = \mu n +c_1 + O\left(n^{-K}\right),
\end{align}
for any $K>0$.

The leading terms in \eqref{VXn} for the variance are computed
similarly.

Numerically, the approximation \eqref{EXn} without the $O$-term is
extremely good even for small values of $n$; see Figure~\ref{fg-EV}.
For example, the error term is already less than $10^{-7}$ when
$n\ge8$.

\begin{figure}[!h]
\begin{center}
\includegraphics[width=5.5cm]{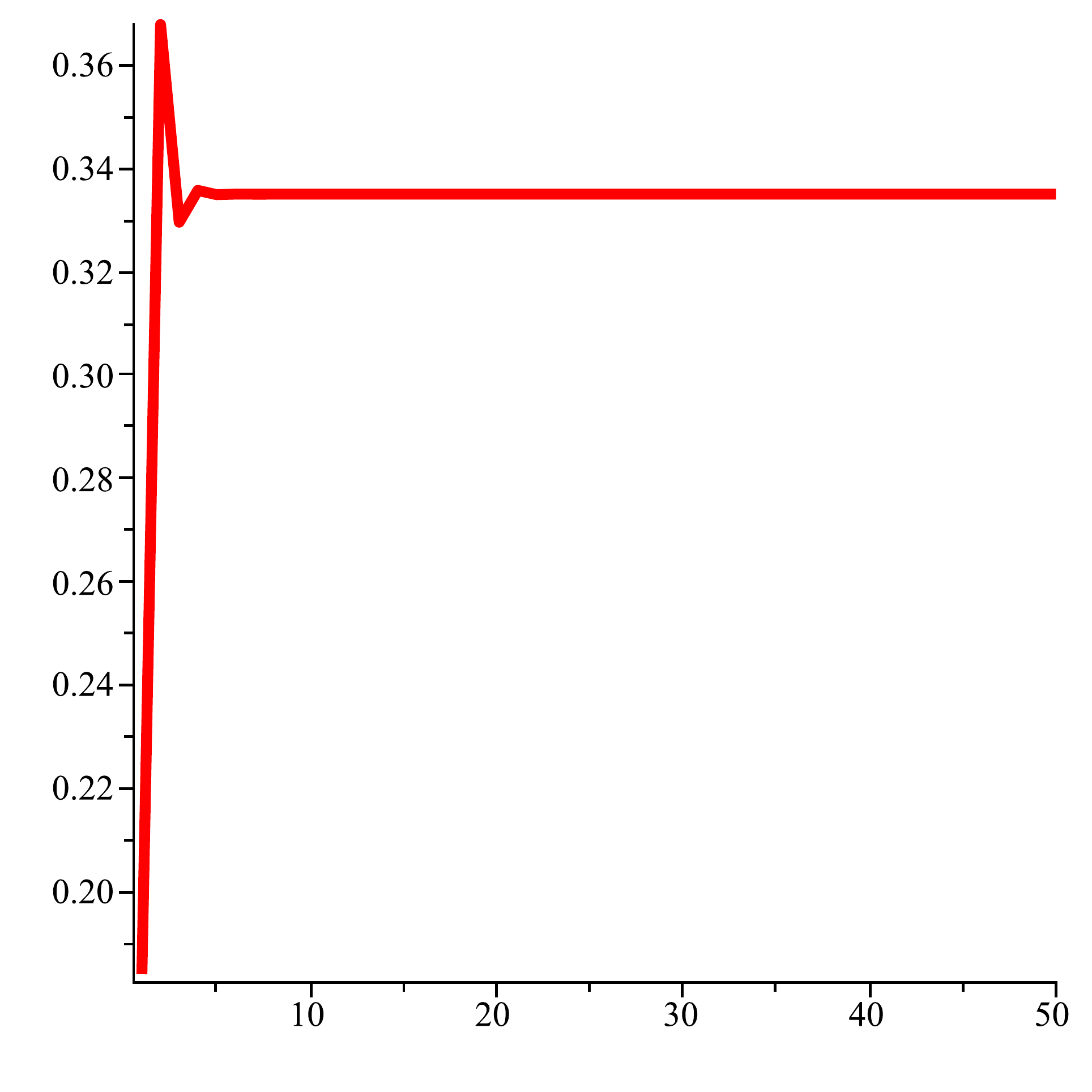}\;\;
\includegraphics[width=5.5cm]{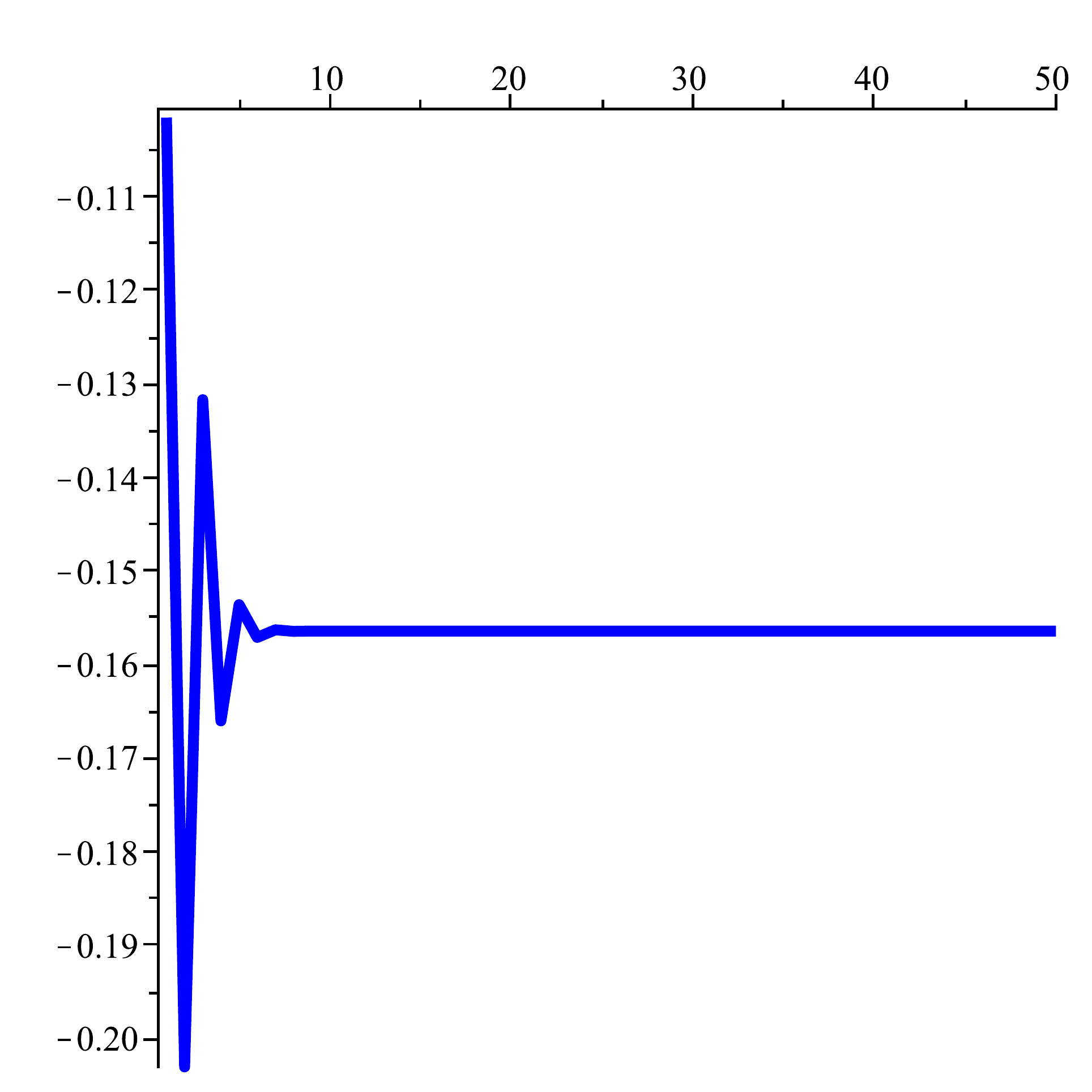}
\end{center}
\vspace*{-.5cm}
\caption{\emph{Goodness of the approximations \eqref{EXn}
and \eqref{VXn} by computing the exact values of
$\mathbb{E}(X_n)-\mu n$ (left) and $\mathbb{V}(X_n)-
\sigma^2 n$ (right) for $n=1,\dots,50$.}}
\label{fg-EV}
\end{figure}

To clarify the rapid convergence of the mean and variance towards
their limit (see Figure~\ref{fg-EV}), we refine the asymptotic
approximation \eqref{EXn} by the following simple error analysis.
Note first that we are dealing with asymptotics of the form
($[z^n]f(z)$ denoting the coefficient of $z^n$ in the Taylor
expansion of $f$)
\[
    [z^n] \frac{f(z)}{(1-z)^m},
    \quad [z^n] \int_0^z \frac{f(u)}{(1-u)^m} \,\dd u,
\]
where $m=1,2,\dots$ and $f$ is an entire function with quickly
decreasing coefficients.

\begin{prop} If $f$ is an entire function whose coefficients
satisfy \label{prop-facto}
\[
    [z^n]f(z) = O\left(\varepsilon_n\right),
\]
where $\ve_n$ is a \emph{positive} sequence satisfying
$\varepsilon_n= O(K^{-n})$ for some $K>1$, then, for $m=1,2,\dots$,
\begin{align}\label{fQz}
    [z^n]\frac{f(z)}{(1-z)^m} =
    \sum_{0\le j<m} \frac{(-1)^j}{j!}\, f^{(j)}(1)
    \binom{n+m-1-j}{m-1-j} + O(|\varepsilon_{n+m}|),
\end{align}
and
\begin{align}\label{fQz2}
    [z^n]\int_0^z\frac{f(u)}{(1-u)^m}\,\dd u =
    \frac1n\sum_{0\le j<m} \frac{(-1)^j}{j!}\, f^{(j)}(1)
    \binom{n+m-2-j}{m-1-j} + O\left(
    \frac{|\varepsilon_{n+m-1}|}{n}\right).
\end{align}
\end{prop}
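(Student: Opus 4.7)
The plan is to Taylor-expand $f$ about the pole $z=1$, read off the polynomial-in-$n$ contribution from the principal singular terms, and show that what is left produces an entire function with rapidly decaying coefficients.

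First, write $f(z) = P_{m-1}(z) + R_m(z)$, where $P_{m-1}(z) := \sum_{0\le j<m}(f^{(j)}(1)/j!)(z-1)^j$ is the Taylor polynomial of degree $m-1$ at $z=1$ and $R_m$ has a zero of order $m$ there. Dividing by $(1-z)^m$ and using $(z-1)^j/(1-z)^m = (-1)^j/(1-z)^{m-j}$ together with $[z^n](1-z)^{-(m-j)} = \binom{n+m-1-j}{m-1-j}$ reproduces, term by term, the main sum in \eqref{fQz}. The problem therefore reduces to controlling $[z^n]\{R_m(z)/(1-z)^m\}$.

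Set $g(z) := R_m(z)/(z-1)^m$; since $R_m$ is entire with a zero of order $m$ at $1$, $g$ is entire and $R_m(z)/(1-z)^m = (-1)^m g(z)$. The crux is a closed form for $g_n := [z^n]g(z)$. Using the integral form of Taylor's remainder with the change of variable $u = 1+t(z-1)$,
\[
g(z) = \frac{1}{(m-1)!}\int_0^1 f^{(m)}\!\bigl(1+t(z-1)\bigr)(1-t)^{m-1}\,\dd{t}.
\]
I would expand $f^{(m)}$ as a power series at the origin via the binomial theorem, interchange sum and integral (justified since $f$ is entire), and evaluate the emerging Beta integrals $\int_0^1 t^n(1-t)^{m+k-n-1}\,\dd t = n!(m+k-n-1)!/(m+k)!$. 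After reindexing, the double sum should collapse to
\[
g_n = \sum_{\ell\ge 0}\binom{m+\ell-1}{m-1}f_{n+m+\ell}.
\]
Inserting $|f_j|\le C\varepsilon_j$ and using $\varepsilon_j = O(K^{-j})$ bounds this by the convergent series $\sum_{\ell\ge0}\binom{m+\ell-1}{m-1}K^{-\ell} = (1-1/K)^{-m}$ times $K^{-(n+m)}$, yielding $g_n = O(|\varepsilon_{n+m}|)$ and settling \eqref{fQz}.

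For \eqref{fQz2} I would integrate the same decomposition. Each building block $\int_0^z\dd{u}/(1-u)^{m-j}$ is elementary, equal to $((1-z)^{-(m-j-1)}-1)/(m-j-1)$ for $j<m-1$ and to $-\ln(1-z)$ for $j=m-1$. After extracting $[z^n]$, the identity $\binom{n+m-2-j}{m-1-j}/n = \binom{n+m-2-j}{m-2-j}/(m-j-1)$ shows that every polynomial contribution merges into $\frac{1}{n}\binom{n+m-2-j}{m-1-j}$, with $[z^n](-\ln(1-z)) = 1/n$ handling the logarithmic case $j=m-1$. The entire-function remainder contributes $(-1)^m\int_0^z g(u)\,\dd{u}$, whose $n$-th coefficient is $g_{n-1}/n = O(|\varepsilon_{n+m-1}|/n)$, matching the stated error.

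The main obstacle is the Beta-integral reduction producing the clean form for $g_n$: the interchange of sum and integral needs care, and the double sum must be reindexed so that the binomial $\binom{m+\ell-1}{m-1}$ emerges explicitly. A secondary, essentially bookkeeping, issue is reading the tail sum $\sum_{\ell\ge 1}\binom{m+\ell-1}{m-1}\varepsilon_{n+m+\ell}$ at the natural geometric scale $K^{-(n+m)}$ supplied by the hypothesis, so that it is absorbed into the $\ell=0$ leading term.
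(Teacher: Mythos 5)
Your argument is correct and, at bottom, is the same singularity-analysis-at-$z=1$ computation as the paper's, but organized at the level of functions rather than coefficients. The paper writes $[z^n]f(z)/(1-z)^m$ as the convolution $\sum_{0\le k\le n}\binom{n+m-k-1}{m-1}f_k$, completes it to $\sum_{k\ge 0}$, re-sums the completed series via the expansion of $f$ at $1$ to get the main term, and identifies the error as the tail $\sum_{k\ge n+m}\binom{n+m-k-1}{m-1}f_k$, which (since $\binom{-\ell-1}{m-1}=(-1)^{m-1}\binom{\ell+m-1}{m-1}$) is exactly your $\pm\sum_{\ell\ge0}\binom{m+\ell-1}{m-1}f_{n+m+\ell}$. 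Your decomposition $f=P_{m-1}+R_m$ reproduces the main term identically, but your route to $g_n=[z^n]\bigl(R_m(z)/(z-1)^m\bigr)$ through the integral remainder formula and Beta integrals is heavier than necessary: the same closed form drops out of the coefficient-level tail in one line, so the machinery you flag as the ``main obstacle'' can be bypassed entirely. For \eqref{fQz2} the paper is again more economical, observing that the $n$-th coefficient equals $\tfrac1n[z^{n-1}]f(z)/(1-z)^m$ and quoting \eqref{fQz} with $n$ replaced by $n-1$, whereas you re-integrate each piece (your identity $\binom{n+m-2-j}{m-1-j}/n=\binom{n+m-2-j}{m-2-j}/(m-1-j)$ and the logarithmic case $j=m-1$ both check out). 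One caveat, which you half-acknowledge and which the paper shares: bounding the tail by $K^{-(n+m)}\sum_{\ell}\binom{m+\ell-1}{m-1}K^{-\ell}$ gives $O(K^{-(n+m)})$, and converting this to $O(\varepsilon_{n+m})$ requires $\varepsilon$ to dominate its own tail (e.g.\ $\varepsilon_{j+1}\le r\,\varepsilon_j$ for some $r<1$, as holds for the intended $\varepsilon_n=c^n/n!$); the hypothesis ``positive with $\varepsilon_n=O(K^{-n})$'' alone does not give this, so you are matching, not falling short of, the paper's level of rigour here.
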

\begin{proof}
Let $f_n := [z^n]f(z)$. Then
\begin{align*}
    [z^n]\frac{f(z)}{(1-z)^m}
    = \sum_{0\le k\le n} \binom{n+m-k-1}{m-1} f_k
    = \sum_{k\ge0}\binom{n+m-k-1}{m-1}  f_k
    - \delta_n,
\end{align*}
where
\begin{align*}
    \delta_n := \sum_{k\ge n+m}
    \binom{n+m-k-1}{m-1}f_k
    = O\left(|\varepsilon_{n+m}|)\right).
\end{align*}
On the other hand, by expanding $f(z)$ at $z=1$ and computing the
coefficients term by term, we have the identity
($f_k=f^{(k)}(0)/k!$)
\[
    \sum_{k\ge0}\frac{f^{(k)}(0)}{k!}\binom{n+m-k-1}{m-1}
    = \sum_{0\le j<m} (-1)^j\frac{f^{(j)}(1)}{j!}
    \binom{n+m-1-j}{m-1-j}.
\]
This proves \eqref{fQz}. For \eqref{fQz2}, we have
\[
    [z^n]\int_0^z\frac{f(u)}{(1-u)^m}\,\dd u
    = \frac1{n}\sum_{0\le k<n} \binom{n+m-k-2}{m-1} f_k .
\]
Then \eqref{fQz2} follows by the same analysis by replacing $n$ by
$n-1$.
\end{proof}
Our analysis indeed applies to a wider class of $f$ but we do not
need this in this paper.

We now apply this lemma to $M_X(z)$, which has the form
\[
    M_X(z) = \int_0^z \frac{f_1(u)}{(1-u)^3} \dd u,
\]
where
\begin{align*}
    f_1(z) = 1+z- \frac{z^2}2\int_0^1
    v^{-1/2} (1-v) e^{-(1+v)z/2} \dd v.
\end{align*}
Thus for $n\ge2$
\[
    [z^n]f_1(z) = \frac{(-1)^{n-1}}{(n-2)!}
    \sum_{0\le j\le n-2} \binom{n-2}{j}(-1)^j
    \frac{(j+1)!\sqrt{\pi}}{\Gamma(j+5/2)2^{j+1}}.
\]
By the standard integral representation for finite
differences (Rice's formula), we deduce that
\begin{align}\label{f1z-asymp}
    [z^n]f_1(z) = 2\frac{(-1)^{n-1}}{n!}
    \left(1+\frac{2}{n+1}+O\left(n^{-2}\right)\right).
\end{align}
Indeed, one obtains the (divergent) full asymptotic expansion
\[
    [z^n]f_1(z) \sim 2\frac{(-1)^{n-1}}{n!}
    \left(1+\sum_{k\ge3}
    \frac{(k-1)(2k-4)!}{2^{k-2}(k-2)!}
    \cdot\frac1{(n+1)\cdots(n+k-2)}
    \right).
\]
On the other hand, we also have
\[
    f_1(1) = 2-e^{-1},\quad
    \text{and} \quad f_1'(1) = c_0;
\]
see \eqref{MXz}. Applying now \eqref{fQz2} gives not only
the leading terms $\mu n+ c_1$ for $\mathbb{E}(X_n)$ but
also the precise error term in \eqref{EXn2}.

In a similar way, we have
\begin{align*}
    S_Y(z) &:= \sum_{n\ge0}\mathbb{E}(Y_n^2)z^n
    = \frac{\partial^2}{\partial t^2} G_Y(z,t) \Biggr|_{t=1}
    +\frac{\partial}{\partial t} G_Y(z,t) \Biggr|_{t=1}\\
    &=\frac{z(2(1+z)-(1+z)^2e^{-z}+e^{-2z})}{2(1-z)^3}
    + \frac{\sqrt{2\pi z}}{4(1-z)^2}\,e^{-z/2}
    (1+z^2-e^{-z}).
\end{align*}
It follows that
\begin{align*}
    S_X(z) := \sum_{n\ge0}\mathbb{E}(X_n^2)z^n
    = \int_0^z\frac{f_2(u)}{(1-u)^4}\, \dd u,
\end{align*}
where
\begin{align*}
    f_2(z) &= \tfrac12f_1(z)^2-e^{-z}f_1(z)
    +(z^2-z+2)f_1(z) \\ &\qquad +(1-z)(1+2z)e^{-z}
    -\tfrac12(1-z)^2(3+2z).
\end{align*}
Consider now $[z^n]f_2(z)$. By \eqref{f1z-asymp}, we see that
the first two terms on the right-hand side dominate and
contribute an order bounded above by
\[
    [z^n]\left(\tfrac12f_1(z)^2-e^{-z}f_1(z)\right)
    \le 4\sum_{2\le k\le n-2}\frac1{k!(n-k)!}
    = O\left(\frac{2^n}{n!}\right),
\]
the remaining terms being of order $O(1/n!)$. Thus
\[
    [z^n]f_2(z) = O\left(\frac{2^n}{n!}\right).
\]
By another application of \eqref{fQz2}, we derive an asymptotic
approximation to the second moment with an error term of the form
$O(2^n/(n+4)!)$, which, together with \eqref{EXn2}, proves
\eqref{VXn}.
\end{proof}

\begin{figure}[!h]
\begin{center}
\includegraphics[width=5.5cm]{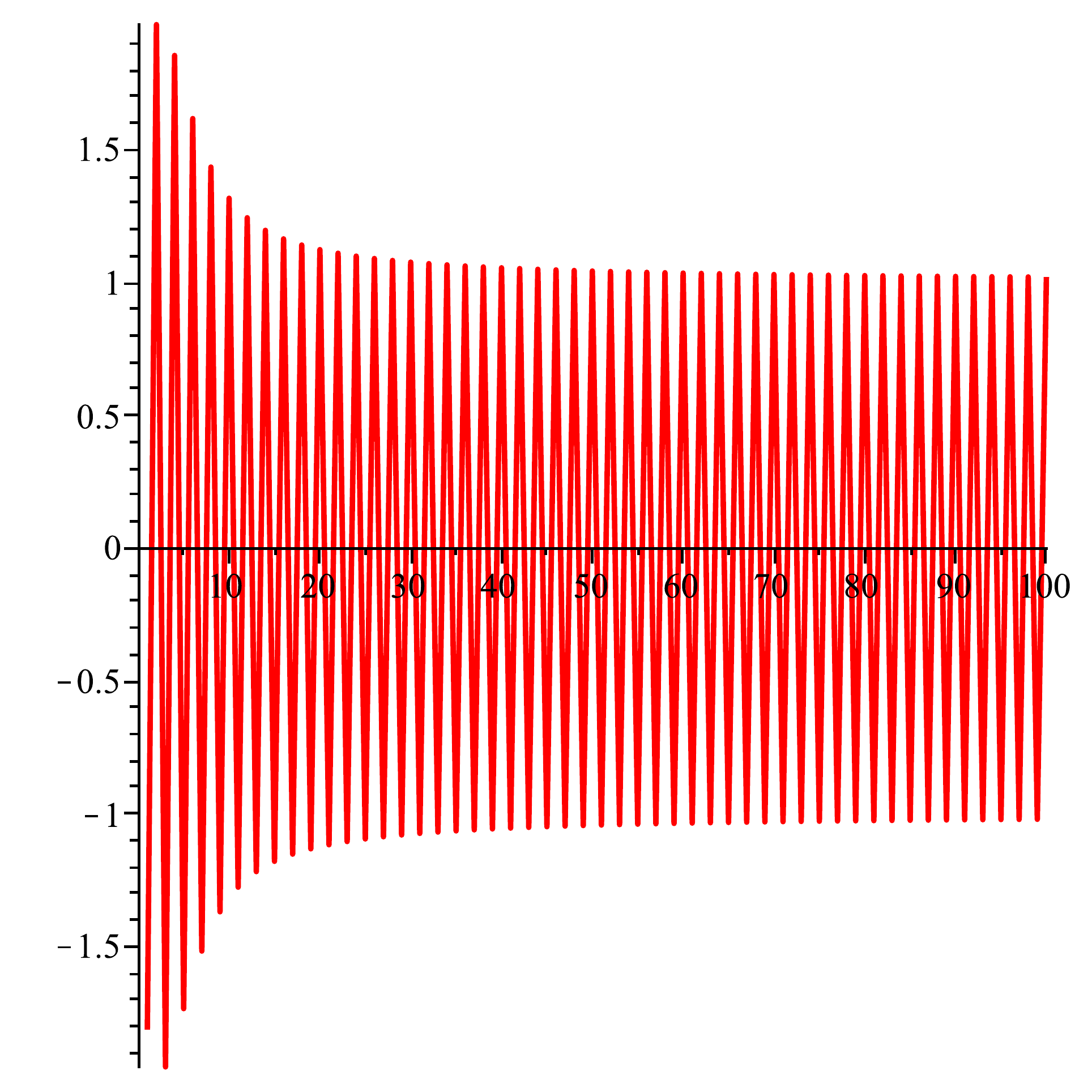}\;\;
\includegraphics[width=5.5cm]{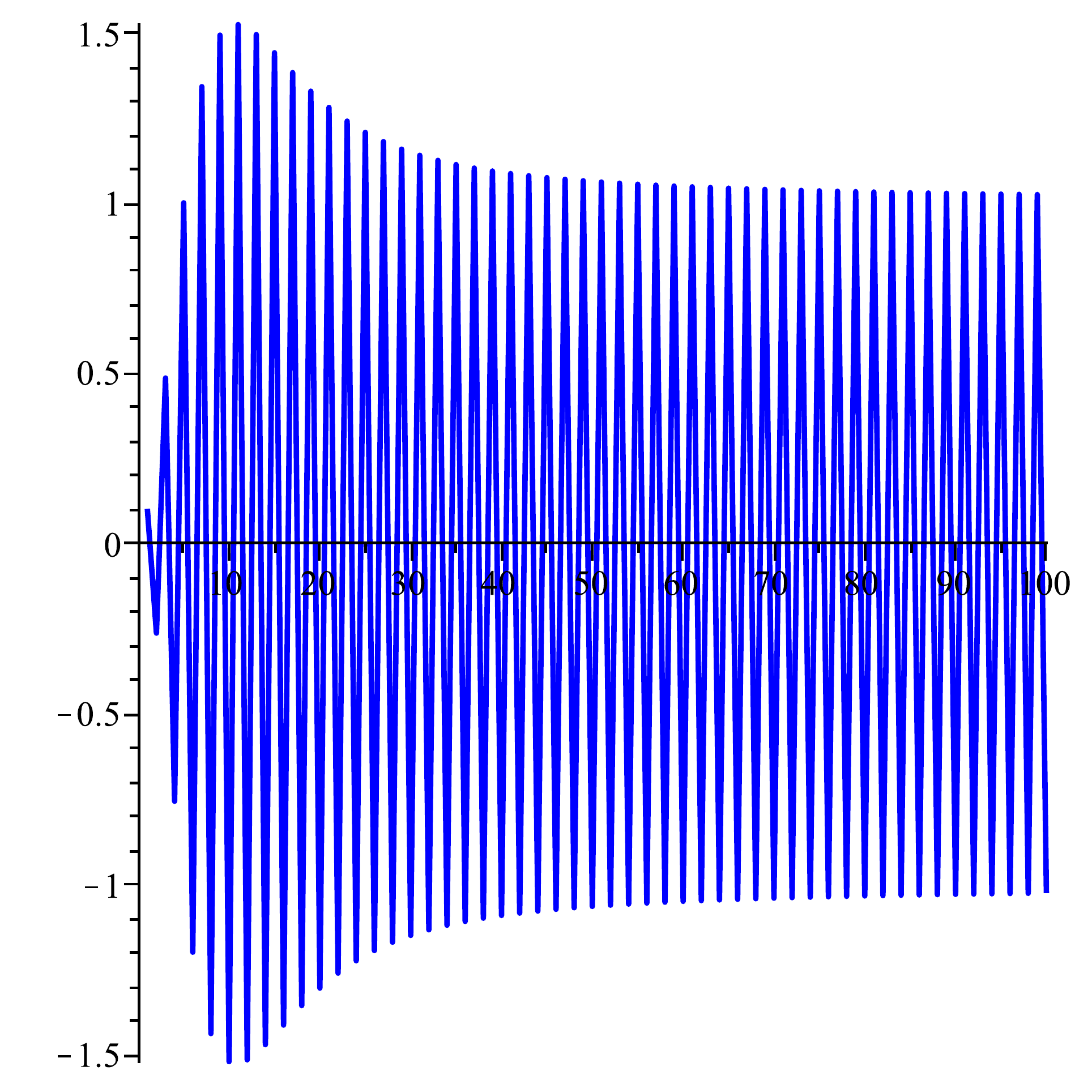}
\end{center}
\vspace*{-.5cm}
\caption{\emph{The factorial errors of \eqref{EXn2} and
\eqref{VXn}: $(\mathbb{E}(X_n)-\mu n - c_1)(n+3)!/2$ (left)
and $(\mathbb{V}(X_n)-\sigma^2 n - c_2)(n+4)!/2^{n+5}$ (right)
for $n=1,\dots,100$.}}
\label{fg-EV2}
\end{figure}

\section{An identity for $X_n(t)$}

Since solutions to Riccati equations have only simple poles, we
expect, from the closed-form expression \eqref{FY-PQ}, that
\begin{align}\label{GYz-pf}
    G_Y(z,t) = \frac{Q(z,t)}{P(z,t)}
    \approx \sum_{k\in\mathbb{Z}} \frac{R_k(t)}{\rho_k(t)-z},
\end{align}
where $\rho_k(t)$ ranges over all zeros of $P(z,t)$ (as a function
of $z$) and
\[
    R_k(t) := -\frac{Q(\rho_k(t),t)}{P'(\rho_k(t),t)},
\]
\emph{Here and throughout this section} $P'(z_0,t) =
(\partial/\partial z)P(z,t)|_{z=z_0}$. The expansion \eqref{GYz-pf}
is roughly true up to correction terms in the series to guarantee
convergence; see \eqref{FYzt}. From this series, we in turn expect
that
\[
    Y_n(t) = \mathbb{E}\left(t^{Y_n}\right)
    \stackrel{?}{=} \sum_{k\in\mathbb{Z}} R_k(t) \rho_k(t)^{-n-1},
\]
which is indeed true for $n\ge1$; see \eqref{Ynt}. What is less
expected is that their convolution \eqref{Xnt-rr}, which yields
$X_n(t)$, also admits a closed-form expression.

Before stating the identity for $X_n(t)$, we start with a brief
discussion for the zeros of $P(z,t)$, namely,
\[
    (1-tz)e^{tz} = \frac{1-t}{1+t},
\]
which are easily seen to be expressible in terms of Lambert's
W-functions \cite{Corless96}. They are the solutions to the equation
\begin{align}\label{wew}
    W(z) e^{W(z)} = z.
\end{align}
This equation has an infinity number of solutions $W_k(z)$,
$k\in\mathbb{Z}$, and among them only one, denoted by $W(z)=W_0(z)$,
is analytic at the origin. This function has the Taylor series
expansion
\begin{align}\label{Wz}
    W(z) = -\sum_{k\ge1}\frac{k^{k-1}}{k!}\,(-z)^k,
\end{align}
and has the branch cut $(-\infty,-e^{-1})$. All other solutions have
the branch cut $(-\infty,0]$.

With these solutions, we have $P(\rho_k(t),t) = 0$ when
\[
    \rho_k(t) = \frac1t\left(1+W_k\left(-e^{-1}
    \frac{1-t}{1+t}\right)\right),
\]
where $\rho_0^{}(t)$ has the branch cut $[-1,0]$ and the other
branches the cut $[-1,1]$. As $t\to1$, all solutions blow up to
infinity except for $\rho_0^{}$ which equals $1$ at $t=1$.

A useful expansion that will be needed is the following convergent
series (see \cite{Corless96})
\[
    W_k(z) = \log z + 2k\pi\bfi -\log(\log z + 2k\pi\bfi)
    +\sum_{j\ge0} \frac{\Pi_j(\log(\log z + 2k\pi\bfi))}
    {(\log z + 2k\pi\bfi)^j},
\]
valid for all $z$, where $\Pi_j(x)$ is a polynomial in $x$ of degree
$j$. In particular, this gives for finite $z$ and $n\ne0$
\begin{align}\label{Wkz}
    |W_k(z)| = O(k+|\log z|).
\end{align}

\begin{thm} For $n\ge1$, we have the identity \label{thm:Xnt}
\begin{align}\label{Xnt}
    X_n(t) = t\sum_{k\in\mathbb{Z}} R_k(t)^2
    \rho_k(t)^{-n-1},
\end{align}
for $n\ge1$ and $t\in\mathbb{C}\setminus \{-1\}$, where
\[
    R_k(t) := \frac1t\left(1-\frac{1-t}{2(1+t)}
    \int_0^1 v^{-1/2} e^{-t\rho_k(t)(1+v)/2} \dd v \right).
\]
When $t=-1$, we have the identity
\begin{align}\label{Xnm1}
    X_n(-1) = -\frac{(-2)^{n-1}}{n} \sum_{0\le k<n}
    \frac{k!(n-1-k)!}{(2k)!(2n-2-2k)!},
\end{align}
and the asymptotic approximation
\begin{align}\label{Xnm1-asymp}
    X_n(-1)= 2\frac{n!(-4)^n}{(2n)!\sqrt{\pi n}}
    \left(1+\frac{9}{8n}+O\left(n^{-2}\right)\right).
\end{align}
\end{thm}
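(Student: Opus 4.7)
The proof of \eqref{Xnt} combines a Mittag--Leffler expansion of $G_Y(z,t) = Q(z,t)/P(z,t)$ with a residue calculation, closed by a Riccati-derived algebraic identity. Since $P(z,t)$ has only simple zeros at $\rho_k(t) = \tfrac{1}{t}(1+W_k(-e^{-1}(1-t)/(1+t)))$, differentiating $P$ and substituting the defining identity $(1+t)(1-t\rho_k) = (1-t)e^{-t\rho_k}$ gives $P'(\rho_k,t) = -t^2(1+t)\rho_k$, while the same relation simplifies $Q(\rho_k,t)$ to produce the explicit formula for $R_k = -Q(\rho_k)/P'(\rho_k)$ stated in the theorem. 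Using the growth bound $|W_k(z)| = O(|k|)$ from \eqref{Wkz}, which forces $|\rho_k|$ to grow at least linearly in $|k|$, the Mittag--Leffler series $G_Y(z,t) = 1 + \sum_k R_k[(\rho_k-z)^{-1} - \rho_k^{-1}]$ converges on compact subsets avoiding $\{\rho_k\}$, and coefficient extraction gives $Y_n(t) = \sum_k R_k/\rho_k^{n+1}$ for $n \ge 1$, establishing the intermediate identity for $Y_n$.

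For $X_n$ with $t \ne -1$, start from $G_X = 1 + t\int_0^z G_Y^2\,du$, so that $X_n = (t/n)[z^{n-1}]G_Y^2$. A direct residue calculation on $G_Y^2/z^n$ is inconvenient because each $\rho_k$ is a \emph{double} pole of $G_Y^2$, contributing both the wanted $R_k^2/\rho_k^{n+1}$ and an unwanted $R_kS_k/\rho_k^n$ (with $S_k$ the regular value of $G_Y(z) - R_k/(\rho_k - z)$ at $\rho_k$). The device I would use is to work instead with the auxiliary function $Q^2/(Pz^{n+2}) = PG_Y^2/z^{n+2}$, whose poles at $\rho_k$ are \emph{simple} thanks to the extra factor of $P$ in the numerator, with residue $-t^2(1+t)R_k^2/\rho_k^{n+1}$. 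To connect this back to $X_n$, I derive the Riccati-type identity
\[
z(PG_Y^2)' - PG_Y^2 = t^2(1+t)z^2 G_Y^2 - Q(z,t),
\]
by multiplying $2zG_Y' = (1 + tz + tz^2 U)G_Y - 1$ through by $G_Y$ and then by $P$ and using the explicit algebraic relations $PU = 2t(1+t)$ and $zP' = -tzP - t^2(1+t)z^2$. Extracting the coefficient of $z^n$ from this identity and matching with the residue-theorem evaluation on an appropriate family of rectangular contours $\Gamma_N$ (chosen to enclose successively more of the $\rho_k$ while controlling the exponential growth of $G_Y$ in one half-plane) delivers \eqref{Xnt}.

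The case $t = -1$ is singular since $P(z,-1) = -2e^{z}$ has no zeros, so \eqref{Xnt} fails as stated. Here I specialize the recurrence \eqref{Xnt-rr} or the explicit generating function $G_X(z,-1)$ directly---both simplify drastically because the integral in $Q$ collapses to elementary terms---and obtain the finite alternating sum \eqref{Xnm1}. The asymptotic \eqref{Xnm1-asymp} then follows by Stirling's formula applied to the ratio $k!(n-1-k)!/((2k)!(2n-2-2k)!)$, whose mass concentrates at $k \approx (n-1)/2$, and a Laplace-type expansion produces both the leading $2\,n!(-4)^n/((2n)!\sqrt{\pi n})$ and the $9/(8n)$ correction. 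The hardest ingredient throughout is the residue computation in the previous paragraph: the naive sum $\sum_k \mathrm{Res}_{\rho_k}(G_Y^2/z^n)$ is polluted by the $S_k$ terms, and the contour integral at infinity does not vanish because $G_Y^2$ grows exponentially in a half-plane. Swapping to $Q^2/(Pz^{n+2})$ converts the offending double poles into simple poles and thereby isolates the $R_k^2$ contribution, while the Riccati-derived identity above furnishes the missing algebraic link between $[z^{n+1}](Q^2/P)$ and $X_n$---a chain of simplifications that is precisely the ``less expected'' structural phenomenon the authors emphasize.
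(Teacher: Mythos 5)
Your setup (the partial-fraction expansion of $G_Y$, the computation $P'(\rho_k,t)=-t^2(1+t)\rho_k$, the resulting formula for $R_k$, and the intermediate identity $Y_n(t)=\sum_k R_k\rho_k^{-n-1}$) matches the paper, and your treatment of $t=-1$ is essentially the paper's (they convert the sum \eqref{Xnm1} to an integral and apply the saddle-point method, while you propose Laplace's method on the sum directly; both work). The gap is in the main step. The central discovery in the paper's proof is the algebraic identity
\[
    2P'(z,t)Q'(z,t)-Q(z,t)P''(z,t)
    = \frac{t^2(1-t)}2\,P(z,t)\int_0^1 v^{-1/2} e^{-tz(1+v)/2}\dd v,
\]
which says precisely that the quantity you call $S_k$ --- the regular value of $G_Y(z)-R_k/(\rho_k-z)$ at $\rho_k$ --- is \emph{identically zero}. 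Your entire detour through $Q^2/(Pz^{n+2})$ is engineered to dodge the ``unwanted $R_kS_k$'' terms, but those terms vanish, and establishing that they vanish (or some equivalent fact) is the crux of the theorem; without it the clean identity \eqref{Xnt}, with no correction terms, cannot come out of any residue bookkeeping. The paper exploits $S_k=0$ not via contours but by substituting $Y_n=\sum_jR_j\rho_j^{-n-1}$ into the convolution \eqref{Xnt-rr}, splitting the double sum into diagonal and off-diagonal parts, and evaluating $\sum_{\ell\ne j}R_\ell/(\rho_\ell(\rho_\ell-\rho_j))=-1/\rho_j+R_j/\rho_j^2$ from the vanishing of the regular part at $\rho_j$; everything except $t\sum_jR_j^2\rho_j^{-n-1}$ then cancels exactly.

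Your own route, as written, does not close. Your differential identity $z(PG_Y^2)'-PG_Y^2=t^2(1+t)z^2G_Y^2-Q$ is correct (I verified it using $PU=2t(1+t)$ and $zP'=-tzP-t^2(1+t)z^2$), but extracting $[z^{n+1}]$ gives
\[
    X_n=\frac{1}{t(1+t)}\,[z^{n+1}]\frac{Q^2}{P}
    +\frac{[z^{n+1}]Q}{n\,t(1+t)},
\]
so for \eqref{Xnt} you must show that the contour evaluation of $[z^{n+1}](Q^2/P)$ yields $t^2(1+t)\sum_kR_k^2\rho_k^{-n-1}$ \emph{minus exactly} $([z^{n+1}]Q)/n$. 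Since $Q^2/P=QG_Y$ grows like $e^{-tz}$ in a half-plane (it is $Q$ and $P$ that grow exponentially, not $G_Y$, which stays bounded), the contribution of your contours $\Gamma_N$ at infinity does not vanish, and you give no argument that it produces this precise $n$-dependent leftover. ``Matching with the residue-theorem evaluation'' is exactly the step that needs a proof, and I do not see how to supply one without rediscovering \eqref{PQ-cst} --- at which point the double poles of $G_Y^2$ are harmless and the detour is unnecessary.
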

The expression \eqref{Xnt} is not only an identity but also an
asymptotic expansion for large $n$ (finite $t$). The left-hand side
is by definition a polynomial of degree $n$, while the right-hand
side is an infinite series of exponentially decreasing terms. It
implies particularly that $X_n(t)$ is roughly of the exponential
order $|\rho_0^{}(t)^{-n}|$ except when $t=-1$ at which $X_n(t)$ is
factorially small. Although $R_k$ can be further expressed in terms
of known functions, the expression we give here is more transparent
and valid for all $t\in\mathbb{C}\setminus\{-1\}$.

\begin{proof}
We start from the local expansion
\[
    G_Y(z,t) \sim \frac{R(\rho(t),t)}{\rho(t)-z},
\]
as $z\sim \rho(t)$, where $P(\rho(t),t)=0$ and
\[
    R(z,t) := -\frac{Q(z,t)}{P'(z,t)}
    = \frac1t\left(1-\frac{1-t}{2(1+t)}
    \int_0^1 v^{-1/2} e^{-tz(1+v)/2} \dd v \right).
\]
A more precise expansion is given as follows
\begin{align*}
    G_Y(z,t) &= \frac{R(\rho(t),t)}{\rho(t)-z}
    + \frac{Q'(\rho(t),t)}{P'(\rho(t),t)}
    - \frac{Q(\rho(t),t)P''(\rho(t),t)}
    {2P'(\rho(t),t)^2} + O(|z-\rho(t)|)\\
    &= \frac{R(\rho(t),t)}{\rho(t)-z}+ O(|z-\rho(t)|),
\end{align*}
where the constant term turns out to be identically zero because
\begin{align}\label{PQ-cst}
\begin{split}
    &2P'(z,t)Q'(z,t)-Q(z,t)P''(z,t)
    = \frac{t^2(1-t)}2\,P(z,t)
    \int_0^1 v^{-1/2} e^{-tz(1+v)/2} \dd v.
\end{split}
\end{align}
This is crucial in proving \eqref{Xnt}.

Since all zeros of the $P(z,t)$ are simple, we have the partial
fraction expansion
\begin{align} \label{FYzt}
    G_Y(z,t) = 1+ \sum_{j\in\mathbb{Z}}R_j(t)
    \left(\frac1{\rho_j(t)-z}-\frac1{\rho_j(t)}\right),
\end{align}
by the classical procedure for meromorphic functions (see \cite[\S
3.2]{Titchmarsh39}), where we used the estimate \eqref{Wkz} for
$W_k$ (see \cite{Corless96}) and the asymptotic approximation
\[
    2\Phi(\sqrt{x})-1 \sim 1 - \sqrt{\frac 2\pi}\,x^{-1/2}
    e^{-x/2}\qquad(x\to\infty).
\]

This implies the identity
\begin{align}\label{Ynt}
    Y_n(t) = \sum_{j\in\mathbb{Z}}R_j(t)\rho_j(t)^{-n-1}
    \qquad(n\ge1).
\end{align}

To prove \eqref{Xnt}, we start with the convolution \eqref{Xnt-rr}
\begin{align*}
    X_n(t) &= \frac{2t}{n}\,Y_{n-1}(t) + \frac{t}{n}
    \sum_{1\le k\le n-2}Y_k(t) Y_{n-1-k}(t) \\
    &= \frac{2t}{n}\sum_{j\in\mathbb{Z}}
    R_j\rho_j^{-n} + \frac{t}{n}
    \sum_{j,\ell\in\mathbb{Z}} R_jR_\ell
    \sum_{1\le k\le n-2} \rho_j^{-k-1}\rho_{\ell}^{-n+k},
\end{align*}
where for convention we drop the dependence on $t$. By the relation
\[
    \sum_{1\le k\le n-2} x^{-k-1}y^{-n+k}
    = \begin{cases}
        (n-2) x^{-n-1},& \text{if }x=y\\
        \displaystyle \frac{x^{-1}y^{-n+1}-y^{-1}x^{-n+1}}{x-y},
        &\text{if } x\not=y
    \end{cases},
\]
we then have
\begin{align*}
    &\frac{t}{n}\sum_{j,\ell\in\mathbb{Z}} R_jR_\ell
    \sum_{1\le k\le n-2} \rho_j^{-k-1}\rho_{\ell}^{-n+k}\\
    &\quad= \frac{t}{n}(n-2)\sum_{j\in\mathbb{Z}}
    R_j^2\rho_j^{-n-1} + \frac{t}{n}\sum_{j\in\mathbb{Z}}
    \sum_{\ell\not= j} R_jR_\ell
    \frac{\rho_j^{-1}\rho_\ell^{-n+1}-
    \rho_\ell^{-1}\rho_j^{-n+1}}
    {\rho_j-\rho_\ell}\\
    &\quad= \frac{t}{n}(n-2)\sum_{j\in\mathbb{Z}}
    R_j^2\rho_j^{-n-1} + \frac{2t}{n}\sum_{j\in\mathbb{Z}}
    R_j \rho_j^{-n+1}\sum_{\ell\not= j} \frac{R_\ell}
    {\rho_\ell(\rho_\ell-\rho_j)}.
\end{align*}
Then we have
\begin{align*}
    X_n(t) &= t\sum_{j\in\mathbb{Z}} R_j^2
    \rho_j^{-n-1}  \\
    &\quad+ \frac{2t}{n}
    \Biggl\{ \sum_{j\in\mathbb{Z}} \left(\rho_j
    R_j-R_j^2\right) \rho_j^{-n-1}
    + \sum_{j\in\mathbb{Z}}
    R_j \rho_j^{-n+1}\sum_{\ell\not= j} \frac{R_\ell}
    {\rho_\ell(\rho_\ell-\rho_j)}\Biggr\}.
\end{align*}
The last double-sum can be further simplified. For, by \eqref{FYzt},
\[
    \lim_{z\to\rho_j} \left(G_Y(z,t) - \frac{R_j}{\rho_j-z}
    \right) = 1-\frac{R_j}{\rho_j} + \sum_{\ell\ne j}
    \frac{\rho_jR_\ell}{\rho_\ell(\rho_\ell-\rho_j)},
\]
on the one hand, and, by \eqref{PQ-cst},
\[
    \lim_{z\to\rho_j} \left(G_Y(z,t) - \frac{R_j}{\rho_j-z}
    \right) = 0,
\]
on the other hand. It follows that
\begin{align*}
    \sum_{\ell\ne j}
    \frac{R_\ell}{\rho_\ell(\rho_\ell-\rho_j)}
    &= -\frac{1}{\rho_j}+\frac{R_j}{\rho_j^2}.
\end{align*}
Thus
\[
    \sum_{j\in\mathbb{Z}} \left(\rho_j
    R_j-R_j^2\right) \rho_j^{-n-1}
    + \sum_{j\in\mathbb{Z}}
    R_j \rho_j^{-n+1}\sum_{\ell\not= j} \frac{R_\ell}
    {\rho_\ell(\rho_\ell-\rho_j)} = 0,
\]
and we conclude the identity \eqref{Xnt}.

Consider now $t=-1$ at which $X_n(t)$ satisfies
\[
    X_n(-1) = \sum_{n/4\le k\le n/2} \left(\mathbb{P}(X_n=2k)
    - \mathbb{P}(X_n=2k-1)\right).
\]
By \eqref{FY-PQ}, we have
\begin{align*}
    G_Y(z,-1)
    = 1-\frac z2\int_0^1 v^{-1/2} e^{-(1-v)z/2}\dd v,
\end{align*}
which, by a direct expansion of the exponential factor and
term-by-term integration, implies that
\[
    Y_n(-1) %=\frac{(-1)^n}{(2n-1)!!}
    = \frac{n!(-2)^n}{(2n)!}\qquad(n\ge0).
\]
From this we derive \eqref{Xnm1}. Express now the convolution sum
\eqref{Xnm1} as an integral as follows
\[
    X_n(-1) = \frac{(n-1)!(-2)^{n-2}}{(2n-2)!}
    \int_0^1 \left((1+2\sqrt{v(1-v)})^{n-1}
    +(1-2\sqrt{v(1-v)})^{n-1}\right) \dd v ,
\]
where we used the relation
\[
    \sum_{0\le k\le n} \binom{2n}{2k}
    z^k = \frac12\left((1+z+2\sqrt{z})^n
    +(1+z-2\sqrt{z})^n\right).
\]
Then the asymptotic expression \eqref{Xnm1-asymp} follows from a
simple application of the saddle-point method. This completes the
proof of the theorem.
\end{proof}

\section{Approximation theorems}

The identity \eqref{Xnt}, when viewing as an asymptotic expansion,
is very useful in deriving limit and approximation theorems with
optimal convergence rate, following the Quasi-Power Framework; see
\cite[\S IX. 5]{FS09},\cite{Hwang98}. Other properties such as
moderate and large deviations can also be derived by standard
arguments.

We start from the ``Quasi-Power approximation"
(see \emph{loc.\ cit.})
\[
    \mathbb{E}\left(e^{X_n s}\right)
    = e^sR_0^2(e^s)\rho(e^s)^{-n-1}
    \left(1+O\left(\ve^n\right)\right),
\]
for some $\ve>0$, uniformly for $|s|\le \delta$ in a small
neighborhood of origin. The exact values of $\ve$ and $\delta$ can
be made explicit by numerical calculations and standard Rouch\'e's
theorem. For example, if we take $\delta=0.2$, then $\ve=1/2$
suffices; see \cite{FGM97} for a similar context.
\begin{figure}
\begin{center}
\includegraphics[width=5cm]{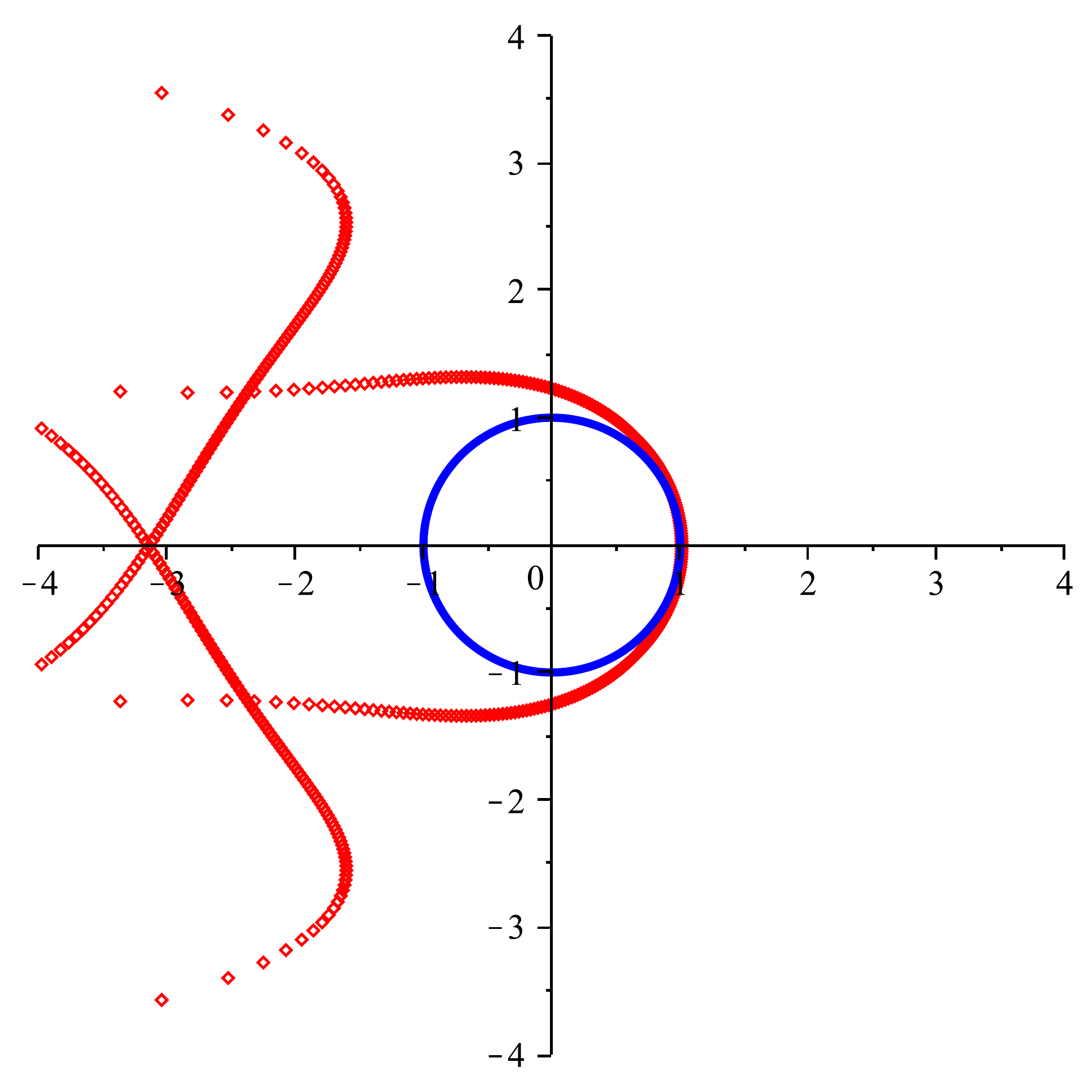}\;\;
\includegraphics[width=5cm]{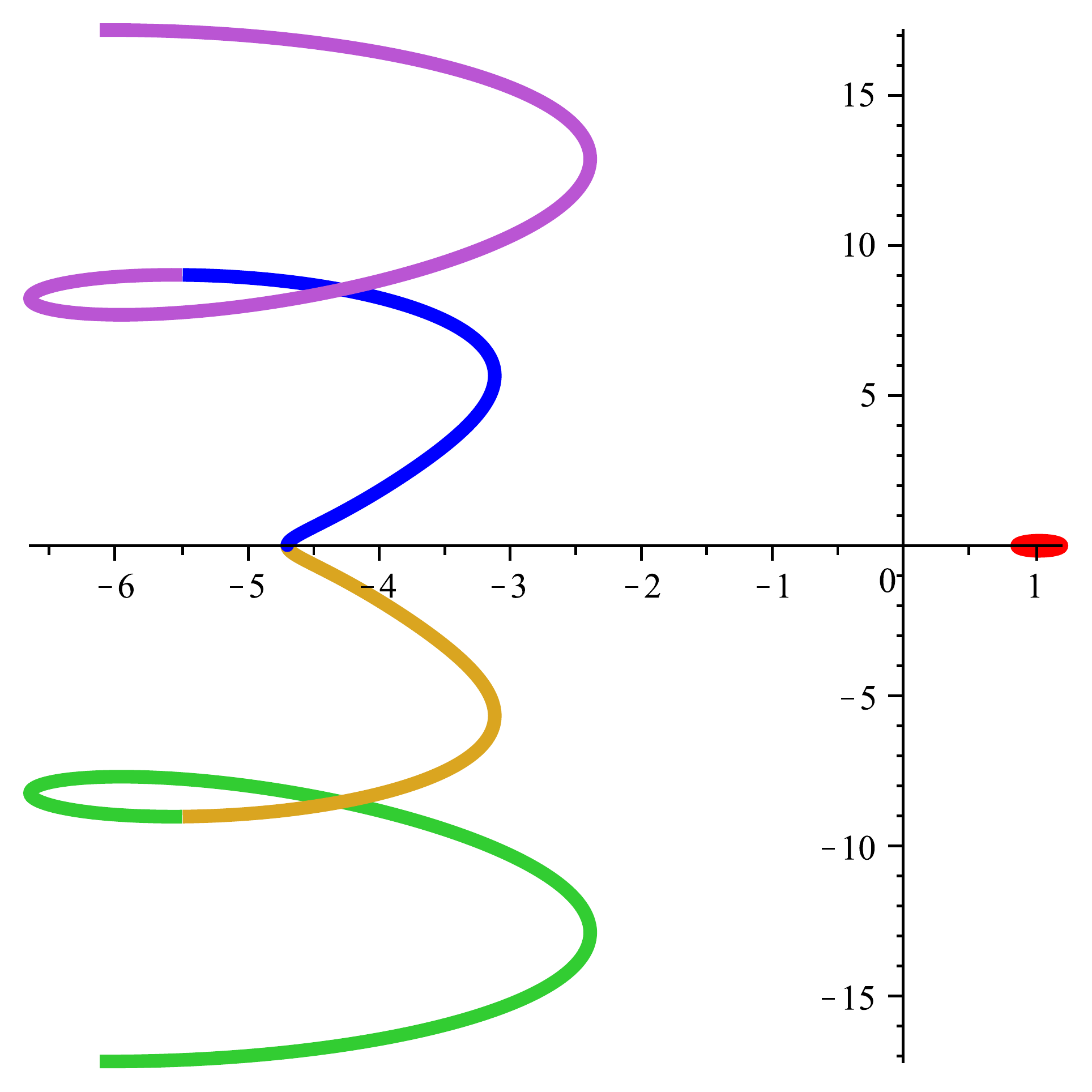}\;\;
\caption{\emph{Approximate zeros of the denominator
$P(z,e^{\bfi\theta})$ of $G_Y$ inside the rectangular region
$[-4-4i, 4+4i]$ (left), and the fives curves
$\{\rho_j(1+0.2e^{\bfi\theta})\}_{j=-2}^2$
for $-\pi\le\theta\le\pi$ (left), where $\rho_0^{}$
is the small red circle near unity.}}
\end{center}
\end{figure}
From this approximation and by a direct Taylor expansion of
$-(n+1)\log\rho(e^s) +s+2\log(R_0(e^s))$ (and justified by the
Quasi-Power Framework; see \emph{loc.\ cit.}), we obtain the two
dominant terms in \eqref{EXn2} and \eqref{VXn} (with weaker error
terms). Moreover, the same argument applies for higher central
cumulants (or moments). In particular, the third and fourth
cumulants are asymptotic to
\[
    \frac{e^{-3}}8\left(2e^2-17\right)n + c_3,
    \quad \text{and} \quad
    \frac{e^{-4}}8\left(-12e^3+71\right) n + c_4,
\]
respectively, where ($\phi := 2\Phi(1)-1$){\small
\begin{align*}
    c_3 &= \frac{e^{-3}}{16}\left(
    (2\pi e)^{3/2}\phi^3 + 12 \pi e \phi^2
    -\sqrt{2\pi e}\left(4e^2-15\right)\phi
    -64+4e^2\right)\\
    &\approx -0.01646\,99733\,69929\dots\\
    c_4 &= \frac{e^{-4}}{16}\left(-3(e\pi)^2\phi^4
    -6(2\pi e)^{3/2}\phi^3 + 2 \pi e(4e^2-21) \phi^2
    +4\sqrt{2\pi e}\left(4e^2-11\right)\phi
    +280-40e^2\right)\\
    & \approx 0.09122\,16766\,24710\dots
\end{align*}}
These expressions show the strength of the Quasi-Power approach.
Although the direct approach used in Section~\ref{sec:mv} to compute
the first two moments provides more precise error terms (factorial
instead of exponential), the approach used here is computationally
simpler, notably for the expressions of the constant terms of
high-order central cumulants.

\begin{figure}
\begin{center}
\includegraphics[width=5.5cm]{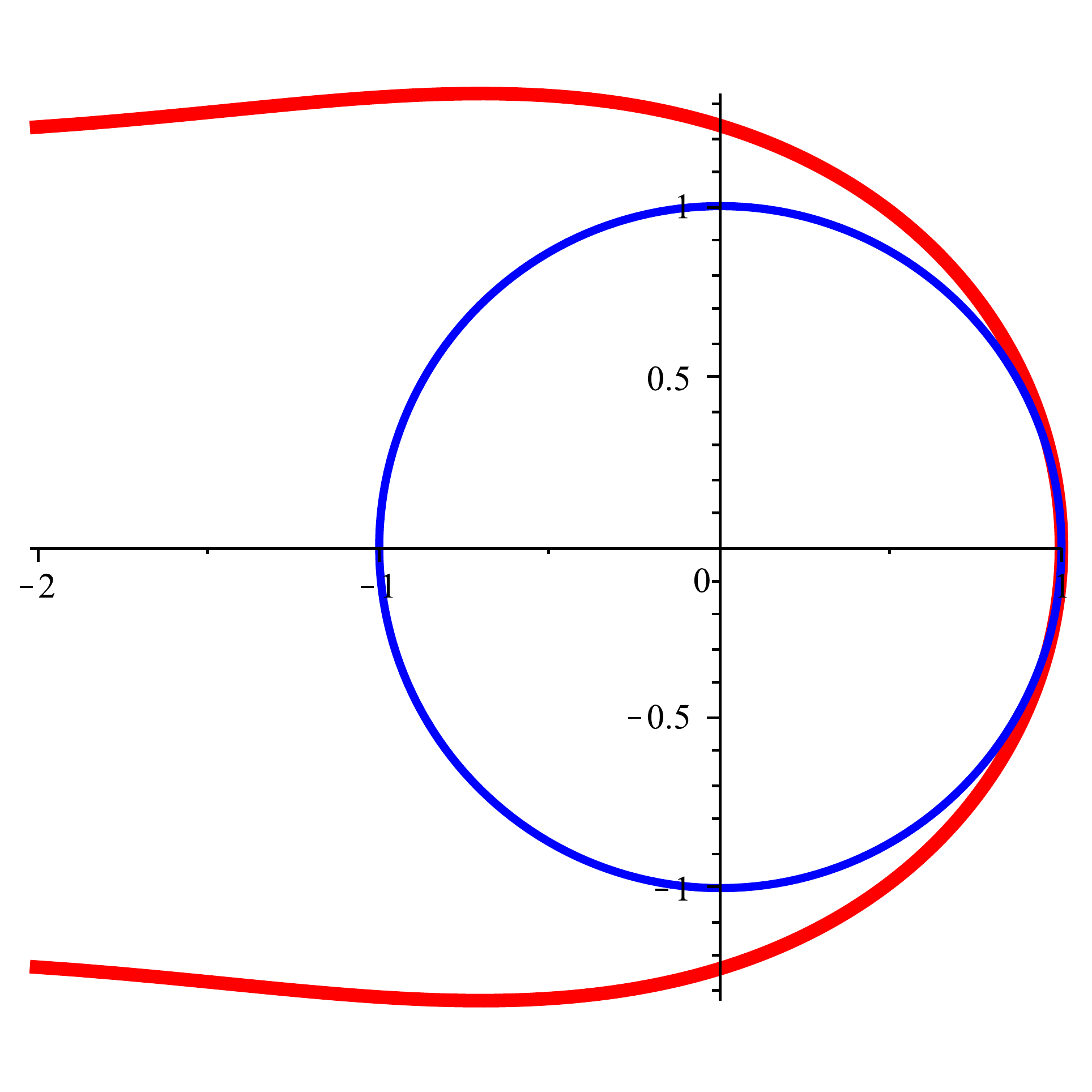}\;\;
\includegraphics[width=5.5cm]{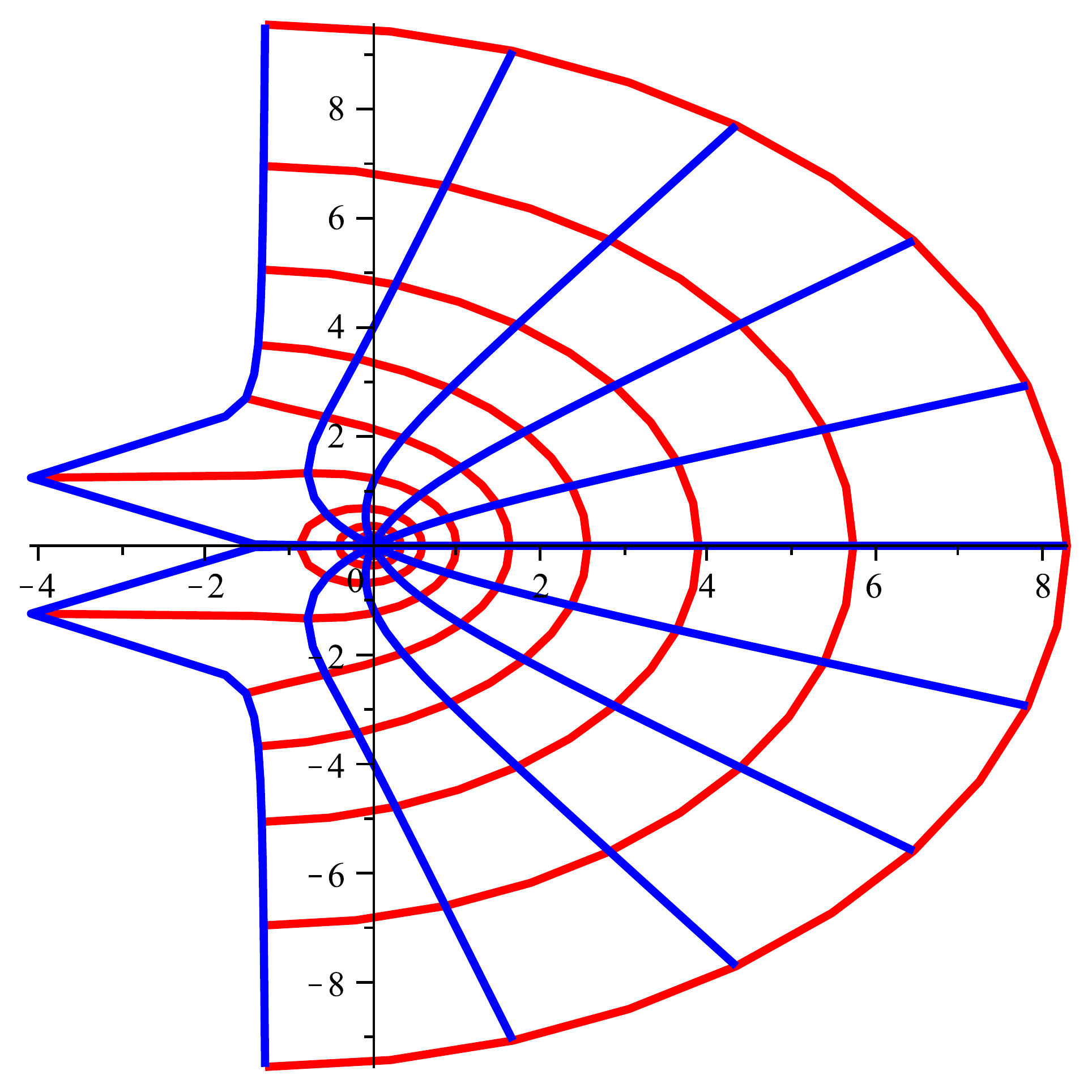}
\caption{\emph{The curve $\rho(t)$ when $|t|=1$ (left),
where the unit circle is also shown, and a conformal plot of
$\rho(e^w)$ (right).}}
\end{center}
\end{figure}

For limit and approximation theorems, we are particularly interested
in the behavior of the dominant term $\rho(t) := \rho_0^{}(t)$ in
the asymptotic expansion \eqref{Xnt} when $|t|=1$. Note that
$\rho(1)=1$ and all other $\rho_k(-e^{-1}(1-t)/(1+t))$'s tend to
infinity when $t\to1$. Also
\[
    \rho_k(e^{\bfi\theta}) = e^{-\bfi\theta}
    \left(1+W_k\left(e^{-1}
    \frac{\sin\theta}{1+\cos\theta}\,\bfi\right)\right).
\]

From \eqref{Xnt}, we have, when $|t|=1$
\begin{align} \label{Xnt-ub}
    X_n(t) = t R(t)^2 \rho(t)^{-n-1}
    + O\left(4^{-n}\right).
\end{align}

\begin{thm}[Central and local limit theorems] Let $\mu := 1-e^{-2}/2$
and $\sigma=\sqrt{\frac34}\,e^{-1}$. We have
\begin{align}\label{BE-bd}
    \sup_{x\in\mathbb{R}}
    \left|\mathbb{P}\left(\frac{X_n-\mu n}{\sigma \sqrt{n}} \le x
    \right) - \Phi(x)\right| = O\left(n^{-1/2}\right),
\end{align}
and, uniformly for $x=o(n^{1/6})$,
\begin{align}\label{LLT}
    \mathbb{P}\left(X_n = \tr{\mu n + x\sigma \sqrt{n}}\right)
    = \frac{e^{-x^2/2}}{\sqrt{2\pi n}\, \sigma} \left(1+
    O\left((1+|x|^3)n^{-1/2}\right)\right).
\end{align}
\end{thm}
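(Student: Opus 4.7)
The plan is to apply the Quasi-Power Theorem of \cite{Hwang98} (see also \cite[\S IX.5]{FS09}) directly to the approximation \eqref{Xnt-ub}. Both \eqref{BE-bd} and \eqref{LLT} reduce to verifying the standard analytic hypotheses: that $\rho(e^s)$ and $R(e^s)$ are analytic in a fixed complex neighborhood of $s=0$, and that the asymptotic variance $\sigma^2 := -\ds\left.\frac{\partial^2}{\partial s^2}\log\rho(e^s)\right|_{s=0}$ is positive. Both hold: the principal Lambert branch $W_0$ is analytic away from $-e^{-1}$, its argument $-e^{-1}(1-e^s)/(1+e^s)$ vanishes at $s=0$, and $\sigma^2 = \tfrac{3}{4}e^{-2}>0$ was already determined in Section~\ref{sec:mv}.

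For \eqref{BE-bd}, a second-order Taylor expansion of $-(n+1)\log\rho(e^{\bfi\theta/(\sigma\sqrt{n})})+2\log R(e^{\bfi\theta/(\sigma\sqrt{n})})$ around $\theta=0$ shows that the characteristic function of the normalized variable $(X_n-\mu n)/(\sigma\sqrt{n})$ equals $e^{-\theta^2/2}(1+O(|\theta|^3/\sqrt{n}))$ uniformly on $|\theta|\le c\sqrt{n}$; the classical Esseen smoothing inequality then produces the rate $O(n^{-1/2})$ in the standard way.

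For the local limit theorem \eqref{LLT} I would use Fourier inversion,
\[
    \mathbb{P}(X_n=m)=\frac{1}{2\pi}\int_{-\pi}^{\pi}e^{-\bfi m\theta}X_n(e^{\bfi\theta})\dd\theta,
\]
and split $[-\pi,\pi]$ into three ranges: (i) the central range $|\theta|\le n^{-2/5}$, where a third-order saddle-point expansion of $\rho(e^{\bfi\theta})^{-n-1}$ around $\theta=0$ produces the Gaussian density with cubic correction $O((1+|x|^3)n^{-1/2})$; (ii) the middle range where $|\rho(e^{\bfi\theta})|\ge 1+c$ forces the integrand to satisfy $|X_n(e^{\bfi\theta})|=O(\alpha^n)$ with $\alpha<1$ by \eqref{Xnt-ub}; and (iii) a small neighborhood of $\theta=\pm\pi$, where the representation \eqref{Xnt} degenerates because $\rho_0(t)\to\infty$ as $t\to-1$. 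In range (iii) I would instead invoke the explicit formula \eqref{Xnm1} and the factorial smallness \eqref{Xnm1-asymp}, extended to nearby $\theta$ by continuity of $X_n(t)$ at $t=-1$, to bound the contribution by $O(n!\,4^n/(2n)!)$, which is negligible.

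The main obstacle is establishing the uniform lower bound $|\rho(e^{\bfi\theta})|\ge 1+c(\theta_0)>1$ on $\theta\in[\theta_0,\pi-\theta_0]$. From $\rho(e^{\bfi\theta})=e^{-\bfi\theta}\bigl(1+W_0(-e^{-1}\bfi\sin\theta/(1+\cos\theta))\bigr)$ the problem reduces to tracing $|1+W_0(\bfi y)|^2$ against $1$ as $y$ ranges over $\mathbb{R}$; the left plot preceding the theorem renders this geometrically evident, but a clean proof requires a short analytic verification using the Taylor series \eqref{Wz} near the origin and the growth bound \eqref{Wkz} at infinity. The remaining delicate point is the bookkeeping between ranges (ii) and (iii) near $\theta=\pm\pi$, where the Quasi-Power expansion blows up while $X_n(e^{\bfi\theta})$ itself remains factorially small; stitching these two regimes together cleanly is what makes this step the trickiest.
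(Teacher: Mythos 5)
Your proposal follows essentially the same route as the paper, whose own proof is only a two-sentence sketch: both \eqref{BE-bd} and \eqref{LLT} are deduced from the uniform approximation \eqref{Xnt-ub} via the Quasi-Power framework (Esseen's smoothing inequality) and Fourier inversion, and your range-splitting fills in exactly the details the paper omits. The one fragile spot is your treatment of range (iii): continuity of $X_n(t)$ at $t=-1$ cannot control a \emph{fixed} neighborhood of $\theta=\pm\pi$ uniformly in $n$, since $X_n$ is a polynomial of degree $n$ whose derivative on the unit circle grows linearly in $n$; the clean fix is to bound the full series \eqref{Xnt} directly near $t=-1$, where every $|\rho_k(t)|$ is bounded away from $1$, rather than to extrapolate from the single value \eqref{Xnm1}.
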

\begin{proof} (Sketch)
The convergence rate \eqref{BE-bd} follows from \eqref{Xnt-ub} and
the classical Berry-Esseen inequality, and is part of the
Quasi-Power Theorem (see \emph{loc.\ cit.}). The local limit theorem
is also straightforward by the corresponding Fourier integral
representation once we have the uniform bound \eqref{Xnt-ub}.
Details are omitted.
\end{proof}
Note that the Berry-Esseen bound \eqref{BE-bd} with a rate of the
form $n^{-1/2+\ve}$ was established in \cite{PS05}; their
formulation is more general but with slightly less precise
approximations.

\begin{figure}
\begin{center}
\includegraphics[width=5cm]{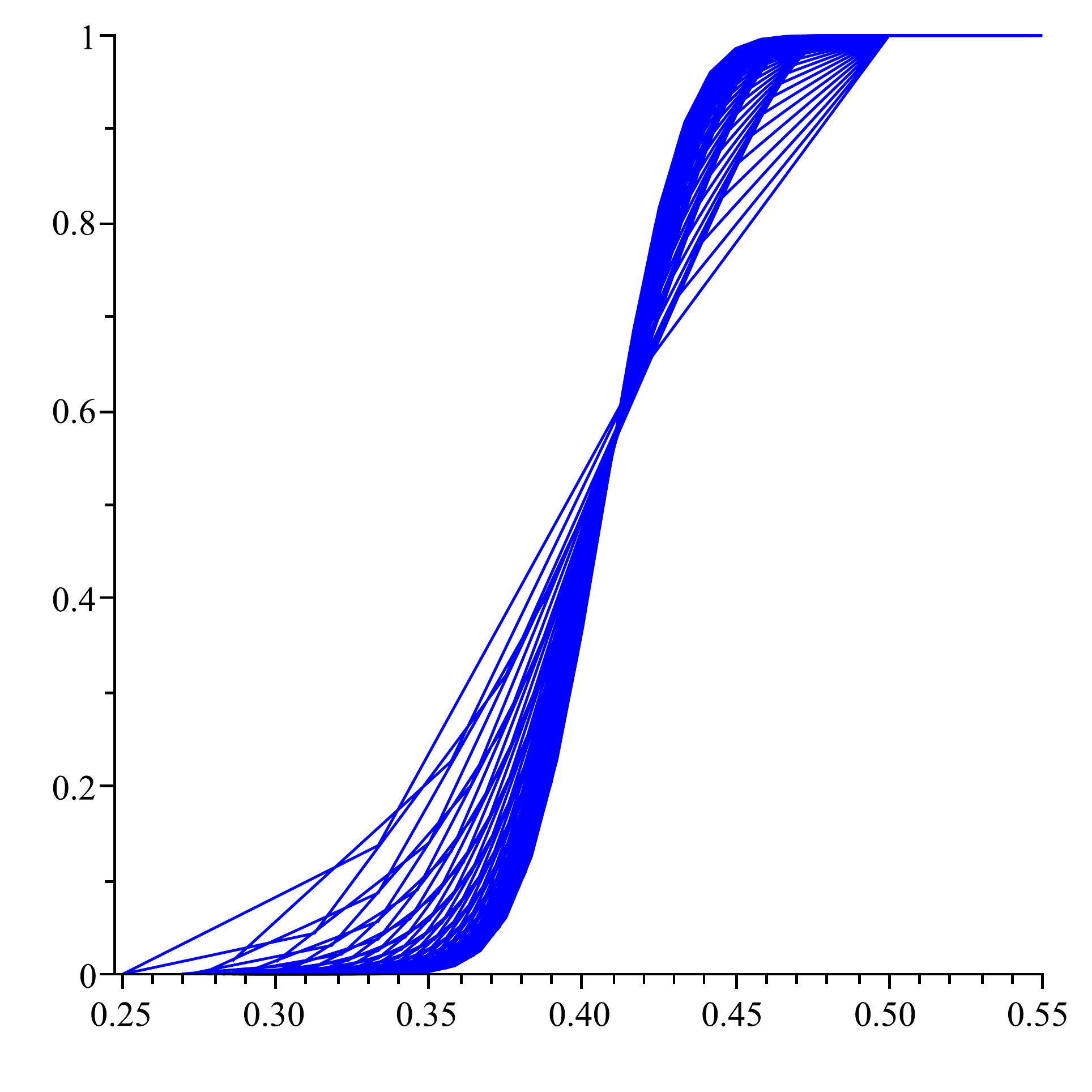}\;\;
\includegraphics[width=5cm]{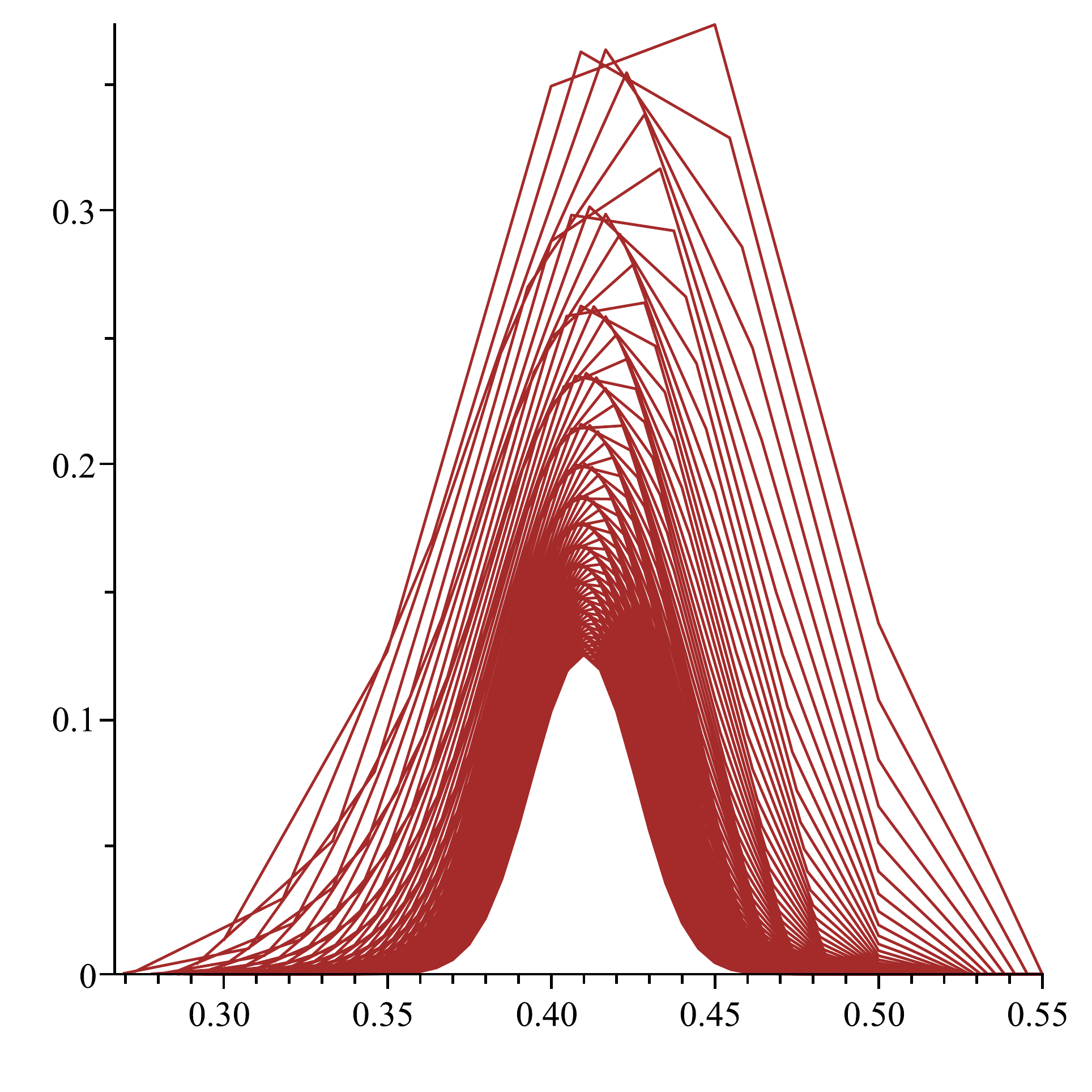}
\caption{\emph{Exact distributions of $X_n$ for $n=6,\dots,60$:
the distributions are plotted against $1/2n$.}}
\end{center}
\end{figure}

\section{Stochastic dominance}
\label{sec:sd}

We clarify the following stochastic dominance relations in this
section.
\begin{thm} For $n\ge1$ \label{thm-ABX}
\begin{equation}
    A_{n+1},B_{n+1}\ge X_{n}\ge A_{n-1}-2,B_{n-1}-2,
    \label{w1}
\end{equation}
where we write $X\ge Y$ (in distribution) if for all $x$
\[
    \mathbb{P}(X\le x) \le \mathbb{P}(Y\le x).
\]
\end{thm}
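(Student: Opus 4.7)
The plan is to reduce all four dominance relations in the theorem to two structural lemmas about the random seating process $X_G$ on an arbitrary finite graph $G$, which are then combined with explicit identifications of $\mathscr{X}_n$ as a sub-configuration of each of $\mathscr{A}_{n\pm 1}$ and $\mathscr{B}_{n\pm 1}$. I work throughout in the label (random-permutation) representation: assign i.i.d.\ uniform $[0,1]$ labels to the vertices, process them in increasing order of label, and accept a vertex iff no earlier-labeled neighbor has been accepted; the resulting maximal independent set has cardinality distributed as $X_G$, and any graph inclusion $G\subseteq G'$ gives a canonical coupling in which labels are shared on common vertices.

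The two lemmas I need are, first, a \emph{pendant lemma}: if $v$ has exactly one neighbor in $G'$ and $G=G'-v$, then $X_{G'}\ge X_G$ in stochastic order; and second, a \emph{degree-two lemma}: if $v$ has exactly two neighbors in $G'$ and $G=G'-v$, then $X_{G'}\ge X_G-1$ in stochastic order. Both are proved pointwise under the label coupling, by induction on $|V(G')|$ using the recursion $\phi(G,L)=1+\phi(G-N[v_{\min}],L)$; one splits cases on whether the minimum-labeled vertex of $G'$ is the extra vertex $v$, one of its neighbors, or a third vertex. For the pendant lemma every case yields a net change $\ge 0$ (acceptance of $v$ compensates for any loss of its unique neighbor $u$, and the further freeing of $u$'s other neighbors is favorable). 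For the degree-two lemma the case where $v$ is the minimum-labeled vertex is the one in which the bound $-1$ is tight: acceptance of $v$ can pre-empt its two neighbors in favor of a single occupant, losing at most one.

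The structural identifications are then routine. Identify $\mathscr{X}_n$ with the $2\times n$ sub-rectangle of $\mathscr{A}_{n+1}$ on top positions $\{2,\ldots,n+1\}$ and bottom positions $\{2,\ldots,n+1\}$; the two remaining seats of $\mathscr{A}_{n+1}$ are top position~$1$ and bottom position~$n+2$, and each has exactly one neighbor in the sub-rectangle (namely top~$2$ and bottom~$n+1$) because its opposite-column partner simply does not exist in $\mathscr{A}_{n+1}$. Hence $\mathscr{A}_{n+1}$ is obtained from $\mathscr{X}_n$ by two successive pendant additions, and two applications of the pendant lemma yield $A_{n+1}\ge X_n$; a parallel identification with bottom~$1$ and bottom~$n+2$ as the pendants gives $B_{n+1}\ge X_n$. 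Conversely, $\mathscr{A}_{n-1}$ is the sub-graph of $\mathscr{X}_n$ on top positions $\{1,\ldots,n-1\}$ and bottom positions $\{2,\ldots,n\}$; the two remaining seats of $\mathscr{X}_n$, top position~$n$ and bottom position~$1$, each have degree exactly two in the host at the moment of deletion, and the degree-two property persists after removing the other. Two applications of the degree-two lemma give $X_n\ge A_{n-1}-2$, and the analogous deletion of top positions~$1$ and~$n$ gives $X_n\ge B_{n-1}-2$.

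The main obstacle is the rigorous proof of the degree-two lemma, because this is precisely the regime in which the naive induced-subgraph dominance $X_H\le X_G$ fails, as witnessed by the counterexamples $H_1,G_1$ and $H_2,G_2$ presented in Section~1; the extra additive constants in the theorem are therefore intrinsic, and my task is only to show that the loss is bounded by exactly one per degree-two excision (rather than two or more). This follows from the $N[v_{\min}]$ recursion once one isolates the extremal case in which $v$ is the minimum-labeled vertex and quantifies the cascading effect through $v$'s two neighbors. The pendant lemma and the structural bookkeeping for the embeddings of $\mathscr{X}_n$ into the four companion configurations are comparatively routine given the correct pictures.
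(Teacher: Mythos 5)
Your reduction hinges on two ``local surgery'' lemmas for \emph{arbitrary} graphs, and the first of them --- the pendant lemma --- is simply false, both pointwise under the shared-label coupling and in distribution. Take $G$ to be the star $K_{1,3}$ with centre $w$ and leaves $u,x_1,x_2$, and let $G'$ be $G$ with a new vertex $v$ attached only to $u$, so that $v$ is pendant and $G=G'-v$. Under labels $L_v<L_u<L_w<L_{x_1},L_{x_2}$ the process on $G'$ accepts $\{v,w\}$ (size $2$) while the process on $G$ accepts $\{u,x_1,x_2\}$ (size $3$): the acceptance of $v$ kills $u$, which \emph{frees} $w$, which in turn kills $x_1,x_2$ --- exactly the cascade your case analysis claims is ``favorable.'' The failure survives averaging: a direct computation gives $\mathbb{P}(X_G\ge 3)=\tfrac34$ but $\mathbb{P}(X_{G'}\ge 3)=\tfrac{11}{15}<\tfrac34$, so $X_{G'}\ge X_G$ fails in the paper's sense of first-order dominance. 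The hidden sub-step in your induction is the claim that deleting one vertex from a graph lowers the greedy independent set by at most one pointwise; the same star shows this is false (deleting $u$ from the ``spider'' $v\!-\!u\!-\!w$, $w\!-\!x_1$, $w\!-\!x_2$ with $L_u$ smallest drops the count from $3$ to $1$). An analogous two-level spider breaks the degree-two lemma pointwise with a loss of $2$. So neither lemma can be proved in the generality you state it, and the pointwise coupling argument cannot be repaired by bookkeeping alone.

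This is precisely the trap the paper's introduction is warning about: the counterexamples $H_1,G_1$ and $H_2,G_2$ show that one-vertex surgeries do not behave monotonically, and the issue is not confined to degree-two attachments. Your structural identifications of $\mathscr{X}_n$ inside $\mathscr{A}_{n\pm1},\mathscr{B}_{n\pm1}$ are correct (two pendants in one direction, two degree-two excisions in the other), but the dominance relations you need are true only because of the special ladder geometry, not because of any graph-local principle. The paper's proof therefore abandons couplings on a common label space and instead runs a simultaneous induction on $n$ over a closed family of fifteen dominance relations among the configuration-specific variables $A_n,B_n,Y_n,X_n$ (relations (0a)--(5c)), glued together by the conditioning Lemmas~\ref{lmm-c1} and~\ref{lmm-c2}; the left and right halves of \eqref{w1} then fall out of (2a)--(2c) and (5a)--(5c) respectively. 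If you want to salvage your plan, you would have to prove your two lemmas only for ladder segments with the various boundary types, and the first-occupied-seat recursion immediately forces you to track all of $\mathscr{A},\mathscr{B},\mathscr{Y}$ simultaneously --- at which point you have reconstructed the paper's argument rather than shortcut it.
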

So the asymptotic normality of $X_n$ can be reduced to that of $A_n$
and $B_n$, which is easier because of the simpler recurrences or the
closed-form expressions \eqref{FAFB}.

The sandwich approximation (\ref{w1}) seems intuitively clear but a
rigorous proof is far from being obvious. Our proof given below is
simple but messy. On the other hand, the ``$-2$'' factors in
\eqref{w1} are not optimal and might be replaced by ``$-1$''; but
our proof is somewhat too weak to justify this.

To prove (\ref{w1}), we establish the following dependence graph of
stochastic dominance relations.
\begin{description}
\item[(i)]
\[
    \text{(0a) }1+B_{n}\ge A_{n},\quad
    \text{(0b) }1+A_{n}\ge B_{n},
\]
\item[(ii)]
\[
    \text{(1a) }A_{n}\ge A_{n-1},\quad
    \text{(1b) }B_{n}\ge B_{n-1},\quad
    \text{(1c) }Y_{n}\ge Y_{n-1},
\]
\[
    \ \ \text{(2a) }A_{n}\ge Y_{n},\ \ \ \quad
    \text{(2b) }B_{n}\ge Y_{n},\quad \ \ \
    \text{(2c) }Y_{n}\ge X_{n-1},
\]
\item[(iii)]
\[
    \text{(3a) }1+B_{n-1}\ge A_{n},\quad \ \
    \text{(3b) }1+A_{n-1}\ge B_{n},\quad
    \text{(3c) }1+Y_{n-1}\ge Y_{n},
\]
\[
    \text{(4a) }1+A_{n-1}\ge Y_{n},\quad \ \ \
    \text{(4b) }1+B_{n-1}\ge Y_{n},\quad \ \
    \text{(4c) }1+Y_{n}\ge X_{n},
\]
\item[(iv)]
\[
    \text{(5a) }1+Y_{n}\ge A_{n-1},\quad \ \
    \text{(5b) }1+Y_{n}\ge B_{n-1},\quad \ \
    \text{(5c) }1+X_{n}\ge Y_{n}.
\]
\end{description}

Combining (2a), (2b) and (2c), we obtain the left-hand side
of \eqref{w1}
\[
    A_{n},B_{n}\ge X_{n-1};
\]
on the other hand, combining (5a), (5b) and (5c) leads to
\[
    2+X_{n}\ge A_{n-1},B_{n-1}.
\]
which is the right-hand side of \eqref{w1}.

The following directed graph indicates the implications of the
diverse stochastic dominance relations. The symbol ``A $\rightarrow
$ B" means that the proof of B uses the induction hypothesis of A.

\quad

\begin{center}
\begin{tikzpicture}[scale=.7, transform shape]
\tikzstyle{every node} = [circle, draw=black,thick]

\node (0a) at (3, 6) {0a,0b};
\node (2a) at (-6, 3) {2a,2b};
\node (2c) at (-6, 0) {\ \ 2c \ \ };
\node (4a) at (-3, 3) {4a,4b};
\node (4c) at (-3, 0) {\ \ 4c \ \ };
\node (1a) at (0, 3) {1a,1b};
\node (1c) at (0, 0) {\ \ 1c \ \ };
\node (3a) at (3, 3) {3a,3b};
\node (3c) at (3, 0) {\ \ 3c \ \ };
\node (5a) at (6, 3) {5a,5b};
\node (5c) at (6, 0) {\ \ 5c \ \ };
\foreach \from/\to in {1a/1c, 2a/2c,
3a/3c, 4a/4c, 5a/5c, 1a/4a, 4a/2a, 1c/4c, 4c/2c, 3a/5a, 3c/5c,
0a/1a, 0a/5a}
\draw [-latex, line width=1pt, color=black] (\from) -- (\to);
\foreach \from/\to in {1a/3a, 1c/3c}
\draw [latex-latex, line width=1.5pt,color=black] (\from) -- (\to);
\end{tikzpicture}
\end{center}

Our proof is based on the following properties of conditional
probability, which remain true when replacing all ``$\ge$'' by
``$\le$''.

\begin{lmm}
\label{lmm-c1} Assume that $\mathscr{E}_{i}$ are disjoint events of
$X$ with $\sum_{i}\mathbb{P}(\mathscr{E}_{i})=1$. If
$(X|\mathscr{E}_{i})\ge Y$ for all $i$, then $X\ge Y$.
\end{lmm}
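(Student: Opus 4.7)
The plan is to unfold the definition of the stochastic dominance relation $X\ge Y$ (namely, $\mathbb{P}(X\le x)\le \mathbb{P}(Y\le x)$ for every $x$) and then to derive the conclusion from the law of total probability applied to the partition $\{\mathscr{E}_i\}$.

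More precisely, I would fix an arbitrary real $x$ and write
\[
    \mathbb{P}(X\le x) = \sum_i \mathbb{P}(X\le x\mid \mathscr{E}_i)\,\mathbb{P}(\mathscr{E}_i),
\]
which is valid because the events $\mathscr{E}_i$ are disjoint with total probability one. The hypothesis $(X\mid\mathscr{E}_i)\ge Y$ means, by the definition just recalled, that $\mathbb{P}(X\le x\mid\mathscr{E}_i)\le \mathbb{P}(Y\le x)$ for every index $i$ and every $x$. Substituting these inequalities term by term gives
\[
    \mathbb{P}(X\le x) \le \sum_i \mathbb{P}(Y\le x)\,\mathbb{P}(\mathscr{E}_i) = \mathbb{P}(Y\le x),
\]
since the probabilities $\mathbb{P}(\mathscr{E}_i)$ sum to one. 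Because $x$ was arbitrary, this is exactly the assertion $X\ge Y$.

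The ``$\le$'' variant is proved in the same way, simply reversing the direction of the inequality in the middle step. There is no real obstacle here: the only thing one must be slightly careful about is that the summation over $i$ is permitted even when the index set is countably infinite, which is immediate since all summands are nonnegative and the series of $\mathbb{P}(\mathscr{E}_i)$ converges to $1$ by assumption. Thus the lemma is essentially a one-line consequence of the law of total probability, and its usefulness in the sequel lies entirely in providing a convenient conditioning device for the inductive proofs of relations (0a)--(5c).
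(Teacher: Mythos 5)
Your proof is correct: the paper states Lemma~\ref{lmm-c1} without proof, treating it as an elementary fact, and your law-of-total-probability argument is exactly the standard one-line justification the authors implicitly rely on. Nothing is missing; the observation about countable index sets and the remark that the ``$\le$'' variant follows by symmetry are both fine.
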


\begin{lmm}
\label{lmm-c2} Assume that $\mathscr{E}_{i},\mathscr{E}_{i}^{\prime
}$ are disjoint events of $X,Y$ with $\mathbb{P} (\mathscr{E}_{i})
=\mathbb{P}( \mathscr{E}_{i}^{\prime })$ and $\sum_{i} \mathbb{P}
(\mathscr{E}_{i})=1$. If $(X|\mathscr{E}_{i}) \ge
(Y|\mathscr{E}_{i}^{\prime })$ for all $i$, then $X\ge Y$.
\end{lmm}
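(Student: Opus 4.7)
The plan is to unfold the definition of the stochastic order $X \ge Y$ (namely $\mathbb{P}(X\le x)\le \mathbb{P}(Y\le x)$ for every $x$) and then apply the law of total probability to both sides simultaneously. The hypothesis gives the desired inequality conditionally on each pair $(\mathscr{E}_i,\mathscr{E}_i')$, while the assumption $\mathbb{P}(\mathscr{E}_i)=\mathbb{P}(\mathscr{E}_i')$ is precisely what is needed to match the weights when one reassembles the conditional pieces into unconditional probabilities.

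Concretely, I would first note that since the events $\{\mathscr{E}_i\}$ are pairwise disjoint with $\sum_i\mathbb{P}(\mathscr{E}_i)=1$, they form a partition of the underlying probability space (up to a null set), and similarly for $\{\mathscr{E}_i'\}$. The law of total probability then gives, for every real $x$,
\[
    \mathbb{P}(X\le x)=\sum_i\mathbb{P}(X\le x\mid\mathscr{E}_i)\,\mathbb{P}(\mathscr{E}_i),
    \qquad
    \mathbb{P}(Y\le x)=\sum_i\mathbb{P}(Y\le x\mid\mathscr{E}_i')\,\mathbb{P}(\mathscr{E}_i').
\]
The conditional dominance hypothesis $(X\mid\mathscr{E}_i)\ge(Y\mid\mathscr{E}_i')$ unfolds to
\[
    \mathbb{P}(X\le x\mid\mathscr{E}_i)\le\mathbb{P}(Y\le x\mid\mathscr{E}_i'),
\]
for each $i$ and every $x$. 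Multiplying by the common weight $\mathbb{P}(\mathscr{E}_i)=\mathbb{P}(\mathscr{E}_i')$ and summing over $i$ yields $\mathbb{P}(X\le x)\le\mathbb{P}(Y\le x)$, which is exactly $X\ge Y$.

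There is essentially no obstacle here; the argument is a direct manipulation of the definition. The one subtle point worth flagging is that the families $\{\mathscr{E}_i\}$ and $\{\mathscr{E}_i'\}$ need not coincide (indeed, they may even live on different probability spaces, one carrying $X$ and the other carrying $Y$), so one must be careful that the weights attached to the $i$th conditional comparison match up. This is exactly why the equal-probability hypothesis is imposed, and it is what distinguishes the statement from Lemma~\ref{lmm-c1}, which is recovered by taking $\mathscr{E}_i'=\mathscr{E}_i$. The parenthetical remark that the same statement holds with every ``$\ge$'' replaced by ``$\le$'' is obtained by reversing each conditional inequality and repeating the same summation verbatim.
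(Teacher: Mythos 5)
Your proof is correct: the paper states Lemma~\ref{lmm-c2} without proof (as an elementary property of conditional probability), and your argument via the law of total probability, matching the weights using $\mathbb{P}(\mathscr{E}_i)=\mathbb{P}(\mathscr{E}_i')$, is exactly the standard verification the authors evidently have in mind. Your remark distinguishing it from Lemma~\ref{lmm-c1} and noting that $X$ and $Y$ may live on different spaces is apt.
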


We apply induction for all proofs. The initial conditions in all
cases can be readily checked. We assume that all the stochastic
dominance relations from (0a) to (5c) hold for all indices up to
$n-1$. We will then prove that they also hold when the indices are
$n$.

\paragraph{Proof of (0a), (0b).}
$1+B_{n}\ge A_{n},\ 1+A_{n}\ge B_{n}$.

We order each seat with a number from $1$ to $2n$ for
$\mathscr{A}_{n}$ and $\mathscr{B}_{n}$ as follows.

\begin{center}
\begin{tikzpicture}[scale=0.5, transform shape,
    every node/.style={circle,draw,minimum width =1.6cm,thick}]
\draw (-3,1) node[draw=none] {\Huge {$\mathscr{A}_{n}$}};
\node (a1) at (0, 2) {$2n$};
\node (a2) at (2, 2) {$2n-1$};
\node (a3) at (4, 2) {$2n-2$};
\node (a4) at (6, 2) {$2n-3$};

\node (a6) at (10, 2) {$n+4$};
\node (a7) at (12, 2) {$n+3$};
\node (a8) at (14, 2) {$n+2$};
\node (a9) at (16, 2) {$n+1$};

\node (b2) at (2, 0) {$n$};
\node (b3) at (4, 0) {$n-1$};
\node (b4) at (6, 0) {$n-2$};
\node (b5) at (8, 0) {$n-3$};
\node (b7) at (12, 0) {$4$};
\node (b8) at (14, 0) {$3$};
\node (b9) at (16, 0) {$2$};
\node (b10) at (18, 0) {$1$};

\fill (8,2) circle (2pt);
\fill (8.5,2) circle (2pt);
\fill (7.5,2) circle (2pt);

\fill (10,0) circle (2pt);
\fill (10.5,0) circle (2pt);
\fill (9.5,0) circle (2pt);

\end{tikzpicture}
\end{center}

and

\begin{center}
\begin{tikzpicture}[scale=0.5, transform shape]
\draw (-3,1) node {\Huge {$\mathscr{B}_{n}$}};
\tikzstyle{every node} = [circle,draw,minimum width =1.6cm,thick]
\node (a1) at (0, 0) {$2n$};
\node (a2) at (2, 2) {$2n-1$};
\node (a3) at (4, 2) {$2n-2$};
\node (a4) at (6, 2) {$2n-3$};

\node (a6) at (10, 2) {$n+4$};
\node (a7) at (12, 2) {$n+3$};
\node (a8) at (14, 2) {$n+2$};
\node (a9) at (16, 2) {$n+1$};

\node (b2) at (2, 0) {$n$};
\node (b3) at (4, 0) {$n-1$};
\node (b4) at (6, 0) {$n-2$};
\node (b5) at (8, 0) {$n-3$};
\node (b7) at (12, 0) {$4$};
\node (b8) at (14, 0) {$3$};
\node (b9) at (16, 0) {$2$};
\node (b10) at (18, 0) {$1$};

\fill (8,2) circle (2pt);
\fill (8.5,2) circle (2pt);
\fill (7.5,2) circle (2pt);

\fill (10,0) circle (2pt);
\fill (10.5,0) circle (2pt);
\fill (9.5,0) circle (2pt);
\end{tikzpicture}
\end{center}

Let $\mathscr{E}_{i},\mathscr{E}_{i}^{\prime }$ be the events of $
A_{n},B_{n} $ in which the first diner occupies seat number $i$.
Then
\[
    (A_{n}|\mathscr{E}_{i})\stackrel{d}{=}B_{n-i}+1+A_{i-2},
    \quad (B_{n}|\mathscr{E}_{i}^{\prime })\stackrel{d}{=}
    A_{n-i}+1+A_{i-2},
\]
for $1\le i\le n$,
\[
    (A_{n}|\mathscr{E}_{i})\stackrel{d}{=}A_{n-j-1}+1+B_{j-1},
    \quad (B_{n}|\mathscr{E}_{i}^{\prime })
    \stackrel{d}{=}B_{n-j-1}+1+B_{j-1},
\]
for $i=n+j,1\le j\le n-1$, and
\[
    (A_{n}|\mathscr{E}_{2n})\stackrel{d}{=}1+B_{n-1},
    \quad (B_{n}|\mathscr{E}_{2n}^{\prime })
    \stackrel{d}{=}1+A_{n-1}.
\]
By the induction hypothesis of (0a) and (0b),
\[
    (1+B_{n}|\mathscr{E}_{i}^{\prime })\ge (A_{n}|\mathscr{E}_{i})
    \quad\text{and}\quad (1+A_{n}|\mathscr{E}_{i})
    \ge (B_{n}|\mathscr{E}_{i}^{\prime })
\]
for $1\le i\le 2n$. By Lemma~\ref{lmm-c2}, we then prove the two
relations $1+B_{n}\ge A_{n}$ and $1+A_{n}\ge B_{n}$.

Note that the proof uses only relations between $A_\cdot$ and
$B_\cdot$; all other proofs will require the induction hypothesis
from other dominance relations.

\paragraph{Proof of (1a), (1b).} $A_{n}\ge A_{n-1},B_{n}\ge B_{n-1}$.

We first show that $A_{n}\ge A_{n-1}$. Let $\mathscr{E}_{1},
\mathscr{E} _{2}$ be the events of $A_{n}$ in which the first
customer selects seat number $1$ and $2$, respectively. Let
$\mathscr{E}_{c}$ be the event of $A_{n}$ in which the first
customer selects seat other than numbers $1,2$.
\[
    \begin{array}{cccccccccccc}
    1 & \bigcirc  & \bigcirc  & \bigcirc  & \bigcirc
    & \bigcirc  & \bigcirc  & \dots  & \bigcirc  & \bigcirc
    & \bigcirc  &  \\
    & 2 & \bigcirc  & \bigcirc  & \bigcirc  & \bigcirc
    & \bigcirc  & \bigcirc & \cdots  & \bigcirc  & \bigcirc
    & \bigcirc
    \end{array}
\]
To apply Lemma \ref{lmm-c1}, we need $(A_{n}|\mathscr{E}_{1}),
(A_{n}|\mathscr{E}_{2}),(A_{n}|\mathscr{E}_{c})\ge A_{n-1}$. We have
\[
    (A_{n}|\mathscr{E}_{1})\stackrel{d}{=}1+B_{n-1}\ge A_{n-1},
\]
by the induction hypothesis of (0a), and
\[
    (A_{n}|\mathscr{E}_{2})\stackrel{d}{=}B_{0}+1+A_{n-2}
    =2+A_{n-2}\ge 1+B_{n-1}\ge A_{n-1},
\]
by the induction hypothesis of (3b) and (0a). Thus
$(A_{n}|\mathscr{E} _{1}),(A_{n}|\mathscr{E}_{2})\ge A_{n-1}$.

To show that $(A_{n}|\mathscr{E}_{c})\ge A_{n-1}$, we consider
$(A_{n}|\mathscr{E}_{c})$ and $A_{n-1}$ (defined on the same
probability space) and apply Lemma~\ref{lmm-c2}. Let
$\mathscr{E}_{j}^{\prime}$ be an event of $A_{n-1}$ in which the
first customer sits on some seat. Similar to the proof of (0a) and
(0b), we see that
\[
    ((A_{n}|\mathscr{E}_{c})|\mathscr{E}_{j}^{\prime })
    \stackrel{d}{=}B_{n-k}+1+A_{k-2}\quad \text{and}
    \quad (A_{n-1}|\mathscr{E}_{j}^{\prime })
    \stackrel{d}{=}B_{n-k-1}+1+A_{k-2},
\]
for some $1\le k\le n-1$, or
\[
    ((A_{n}|\mathscr{E}_{c})|\mathscr{E}_{j}^{\prime })
    \stackrel{d}{=}A_{n-k-1}+1+B_{k-1}\quad \text{and}
    \quad (A_{n-1}|\mathscr{E}_{j}^{\prime })
    \stackrel{d}{=}A_{n-k-2}+1+B_{k-1},
\]
for some $1\le k\le n-1$. By induction hypothesis of (1a) and(1b),
\[
    ((A_{n}|\mathscr{E}_{c})|\mathscr{E}_{j}^{\prime })
    \ge (A_{n-1}|\mathscr{E}_{j}^{\prime })\quad \text{for all }j.
\]
By Lemma \ref{lmm-c2}, we obtain $(A_{n}|\mathscr{E}_{c})\ge
A_{n-1}$. This proves that $A_{n}\ge A_{n-1}$. The proof for
$B_{n}\ge B_{n-1}$ is similar.

The proofs for the other cases follow, \emph{mutatis mutandis}, the
same line of inductive arguments; details are straightforward and
omitted here.

\section{A combinatorial model}

Instead of the sequential stochastic model considered in this paper,
more static combinatorial models (sometimes referred to as hard-core
mode) were also considered in the literature, where all possible
unfriendly seating arrangements are equally likely. Such models turn
out to be much simpler to analyze. Let $N_n$ denote the total number
of distinct unfriendly seating arrangements under the initial
configuration $\mathscr{Y}_n$ (see \eqref{YYn}). Then $N_n$ is given
by the Fibonacci number
\[
    N_n = N_{n-1}+N_{n-2}\qquad(n\ge2),
\]
with $N_0=1$ and $N_1=1$. If we still denote by $X_n$ and $Y_n$ the
number of occupied seats when starting with the initial
configurations \eqref{XXn} and \eqref{YYn}, respectively, as we
studied above, then we have the simple recurrences for their
probability generating functions
\[
    X_n(t) = tY_{n-1}(t),\quad
    \text{and} \quad Y_n(t) = \frac{tN_{n-1}}{N_n}\, Y_{n-1}(t)
    +\frac{tN_{n-2}}{N_n}\, Y_{n-2}(t),
\]
with $Y_0(t)=1$ and $Y_1(t)=t$. This is easily solved and we have
\[
    X_n(t) = \frac{t}{N_{n-1}}
    \sum_{\cl{n/2}\le j\le n-1}\binom{j}{n-1-j} t^j
    \qquad(n\ge0),
\]
which is the essentially sequences A102426 and A098925 in Sloane's
Encyclopedia of Integer Sequences (see also A092865). This is also
connected to the number of parts in random compositions in which
only $1$ and $2$ are used. A local limit theorem with optimal
convergence rate can be derived by standard means;
see \cite[IX. 9]{FS09}.
The expected value is asymptotic to $\frac{2}{5-\sqrt{5}}\,n$ and
the variance to $\frac{3(3-\sqrt{5})}{\sqrt{5}(5-\sqrt{5})^2}\,n$.
Numerically, the jamming density is
\[
    \frac{1}{5-\sqrt{5}}\approx 0.36180\dots,
\]
which is smaller than that in the sequential model; the variance
constant is much smaller
\[
    \frac{3(3-\sqrt{5})}{\sqrt{5}(5-\sqrt{5})^2}\approx
    0.08944\dots.
\]
We conclude that the \emph{space utilization is better in the
sequential model than in the combinatorial model}. Such a property
has already been observed in the statistical physics literature; see
for example \cite{BK92} (where the combinatorial model is referred
to as the Hamiltonian system). Note that for the corresponding 1-row
seat configuration, one has the jamming density ($\alpha :=
\sqrt[3]{100+12\sqrt{69}}$)
\[
    \frac{(\alpha-2)(\alpha+2)^2(\alpha^3-192)}{4416}
    \approx 0.41149\dots,
\]
and the variance constant
\[
    \frac{6}{529}\cdot
    \frac{3\alpha^4+17\alpha^3-184\alpha^2+68\alpha+48}
    {(\alpha^2-2\alpha+4)^2}
    \approx 0.008539\dots.
\]
See \cite{Evans93,JM58} for more information.

% \begin{quote}
% D. L. Wernick and S. Schochet,
% Kinetics of dimerization in stack crystal structures,
% \emph{J. Phys. Chem.}, \textbf{92} (1988), 6773--6778.
%
% H. S. Green and R. Leipnik,
% Exact solution of the association problem by a matrix-spinor method with
% applications to statistical mechanics,
% \emph{Rev. Modern Phys.}, \textbf{32} (1960), 129--141.
%
% S. Katsura and S. Inawashiro,
% On Green and Leipnik's method for solution of the association problem,
% \emph{Rev. Modern Phys.}, \textbf{32} (1960), 1031--1032.
%
% \end{quote}
%
% Also McQuistan wrote many papers on related materials but mostly
% for placing dumbbell shapes. They are available at my digital lib.
% I'll look at them later.

% \section*{Acknowledgment}
% We thank Luc Devroye for helpful comments.

\bibliographystyle{acm}

\begin{thebibliography}{99}

\bibitem{BK92} A. Baram and D. Kutasov, Random sequential adsorption
on a quasi-one-dimensional lattice: an exact solution, \emph{J.
Phys. A} \textbf{25} (1992), L493--L498.

\bibitem{CAP07} A. Cadilhe, N. A. M. Ara{\'u}jo and V. Privman,
Random sequential adsorption: from continuum to lattice and
pre-patterned substrates, \emph{J. Phys.: Condensed Matter},
\textbf{19} (2007), 065124 (12 pp.).
 %
 % \bibitem{Chen10} L.-Y. Chen,
 % \emph{Scalable Algorithms for Communication Networks},
 % Ph.D. Dissertation, University of Michigan, 2010.

\bibitem{Corless96} R. M. Corless, G. H. Gonnet, D. E. G. Hare, D. J.
Jeffrey and D. E. Knuth, On the Lambert W function, \emph{Adv.
Comput. Math.} \emph{5} (1996), 329--359.

% \bibitem{DK86} G. R. Desiraju and V. Kannan, What is the maximum
% yield in the solid state cinnamic acid dimerisation? A combinatorial
% mathematical approach, \emph{Proc. Indian Acad. Sci.-Chem. Sci.},
% \textbf{96} (1986), 351--362.

\bibitem{Downton61} F. Downton, A note on vacancies on a line,
\emph{J. Roy. Statist. Soc. Ser. B}, \textbf{23} (1961), 207--214.

\bibitem{DSI11} M. Dutour-Sikiri\'{c} and Y. Itoh, \emph{Random
Sequential Packing of Cubes}, World Scientific Publishing Co.,
Hackensack, NJ, 2011.

\bibitem{DR64} A. Dvoretzky and H. Robbins,
On the ``parking" problem,
\emph{Magyar Tud. Akad. Mat. Kutato Int. Kozl.} \textbf{9}
(1964) 209--225.

\bibitem{Evans93} J. W. Evans, Random and cooperative sequential
adsorption, \emph{Rev. Modern Phys.}, \textbf{65} (1993),
1281--1330.

\bibitem{Evans05} J. W. Evans, Random and cooperative sequential
adsorption: exactly solvable models on 1D lattices, continuum
limits, and 2D extensions, in \emph{Nonequilibrium Statistical
Mechanics in One Dimension}, Edited by C. Privman, 2005, pp.\
205--228, Cambridge University Press.

\bibitem{FP92} Y. Fan and J. K. Percus, Random sequential adsorption
on a ladder, \emph{J. Statist. Phys.} \textbf{66} (1992), 263--271.

\bibitem{Finch03} S. R. Finch, \emph{Mathematical Constants},
Cambridge University Press, Cambridge, 2003.

\bibitem{Flajolet97} P. Flajolet, A seating arrangement problem,
Maple worksheet, (1997); available at
\href{http://algo.inria.fr/libraries/autocomb/fatmen-html/fatmen1.html}
{algo.inria.fr/libraries/autocomb/fatmen-html/fatmen1.html}.

\bibitem{FGM97} P. Flajolet, X. Gourdon and C. Mart\'\i nez,
Patterns in random binary search trees,
\emph{Random Structures Algorithms} \textbf{11} (1997), 223--244.

\bibitem{FS09} P. Flajolet and R. Sedgewick, \emph{Analytic
Combinatorics}, Cambridge University Press, Cambridge, 2009.

% \bibitem{FK09} S. R. Fleurke and C. K\"ulske,
% A second-row parking paradox,
% \emph{J. Stat. Phys.} \textbf{136} (2009), 285--295.

\bibitem{Flory39} P. J. Flory, Intramolecular reaction between
neighboring substituents of vinyl polymers, \emph{J. Amer. Chem.
Soc.}, \textbf{61} (1939), 1518--1521.

\bibitem{Freedman62} D. Freedman and L. Shepp, An unfriendly seating
arrangement (Problem 62--3), \emph{SIAM Rev.} \textbf{4} (1962) 150.

\bibitem{Friedman64} H. D. Friedman and D. Rothman, Solution to: An
unfriendly seating arrangement (problem 62--3), \emph{SIAM Rev.}
\textbf{6} (1964) 180--182.

\bibitem{GKK09} K. Georgiou, E. Kranakis and D. Krizanc, Random
maximal independent sets and the unfriendly theater seating
arrangement problem, \emph{Discrete Math.} \textbf{309} (2009),
5120--5129.

% \bibitem{HTW91} K. D. M. Harris, J. M. Thomas and D. Williams,
% Mathematical analysis of intra-stack dimerizations in reactive
% crystalline solids, \emph{J. Chem. Soc., Faraday Trans.},
% \textbf{87} (1991), 325--331.

\bibitem{Hwang98} H.-K. Hwang, On convergence rates in the central
limit theorems for combinatorial structures, \emph{European J.
Combin.} \textbf{19} (1998), 329--343.

\bibitem{JM58} J. L. Jackson and E. W. Montroll, Free radical
statistics, \emph{J. Chem. Phys.}, \textbf{28} (1958), 1101--1109.

\bibitem{KR03} C. A. J. Klaassen and J. T. Runnenburg, Discrete
spacings, \emph{Statist. Neerlandica}, \textbf{57} (2003), 470--483.

\bibitem{MacKenzie62} J. K. MacKenzie, Sequential filling of a line
by intervals placed at random and its application to linear
adsorption, \emph{J. Chem. Phys.}, \textbf{37} (2004), 723--728.

\bibitem{Page59} E. S. Page, The distribution of vacancies on a line,
\emph{J. Roy. Statist. Soc. Ser. B}, \textbf{21} (1959), 364--374.

\bibitem{PP98} A. Panholzer and H. Prodinger, A generating functions
approach for the analysis of grand averages for multiple QUICKSELECT,
\emph{Random Structures Algorithms} \textbf{13} (1998), 189--209.

\bibitem{Penrose02} M. D. Penrose, Limit theorems for monotonic
particle systems and sequential deposition, \emph{Stochastic
Process. Appl.} \textbf{98} (2002), 175--197.

\bibitem{Penrose08} M. D. Penrose, Existence and spatial limit
theorems for lattice and continuum particle systems, \emph{Probab.
Surv.} \textbf{5} (2008), 1--36.

\bibitem{PS05} M. D. Penrose and A. Sudbury, Exact and approximate
results for deposition and annihilation processes on graphs,
\emph{Ann. Appl. Probab.} \textbf{15} (2005), 853--889.

\bibitem{Runnenburg82} J. T. Runnenburg, Asymptotic normality in
vacancies on a line, \emph{Statist. Neerlandica} \textbf{36} (1982),
135--148.

\bibitem{Titchmarsh39} E. C. Titchmarsh, The Theory of Functions, 2nd
edition, Oxford University Press, 1939.

\end{thebibliography}

\end{document}